\newtheorem{set2}{Satz}[section]
\newtheorem{theorem}[set2]{Theorem}
\newtheorem{corollary}[set2]{Corollary}
\newtheorem{definition}[set2]{Definition}
\newtheorem{lemma}[set2]{Lemma}
\newtheorem{notation[set2]}{Notation}
\newtheorem{remark}[set2]{Remark}
\newcommand{\ep}{\hfill{$\square$}}
\newenvironment{proof}[1][Proof]{\textit{#1.} }{\
\\}
\def\XXint#1#2#3{{\setbox0=\hbox{$#1{#2#3}{\int}$}
\vcenter{\hbox{$#2#3$}}\kern-.5\wd0}}
\newcommand{\R}{\mathbb{R}}
\newcommand{\N}{\mathbb{N}}
\newcommand{\C}{\mathcal}
\newcommand{\ol}{\overline}
\newcommand{\dx}{\,\mathrm dx}
\newcommand{\ds}{\,\mathrm ds}
\newcommand{\dxt}{\,\mathrm dx\,\mathrm dt}
\newcommand{\dxs}{\,\mathrm dx\,\mathrm ds}
\newcommand{\weaklim}{\rightharpoonup}
\newcommand{\interior}{\mathrm{int}}
\newcommand{\essinf}{\mathrm{ess\,inf}}
\newcommand{\compact}{\subset\!\subset}
\newcommand{\argmin}{\mathrm{arg\,min}}
\newcommand{\e}{\epsilon}
\date{\today}
\title{A degenerating Cahn-Hilliard system coupled with complete damage processes}
\author{\nofnmark{}Christian Heinemann, Christiane Kraus
\footnote{Weierstra\ss-Institut\\
Mohrenstr. 39\\ 10117 Berlin \\ Germany\\E-Mail: \email{christian.heinemann@wias-berlin.de}\\
\hspace{3.6em}\email{christiane.kraus@wias-berlin.de}}}
\begin{document}
\begin{center}
\Large
A degenerating Cahn-Hilliard system coupled with complete damage processes
\end{center}
\begin{center}
	Christian Heinemann\footnote{Weierstrass Institute for Applied Analysis and Stochastics (WIAS), Mohrenstr. 39, 10117 Berlin, Germany.\\
		This project is supported by the DFG Research Center  
``Mathematics for Key Technologies''  Matheon in Berlin.\\
E-mail: \texttt{christian.heinemann@wias-berlin.de} and
\texttt{christiane.kraus@wias-berlin.de}},
	Christiane Kraus$^1$
\end{center}
\begin{center}
	December 30, 2012
\end{center}

\vspace{2mm}
\noindent

\begin{abstract}
	In this work, we analytically investigate a degenerating PDE system
	for phase separation and complete damage processes
	considered on a nonsmooth time-dependent domain
	with mixed boundary conditions. The evolution of the system is
	described by a \textit{degenerating} Cahn-Hilliard equation for the concentration, a doubly nonlinear differential inclusion for the damage
	variable and a \textit{degenerating} quasi-static balance equation for the displacement
	field. All these equations are highly nonlinearly coupled.
	Because of the doubly degenerating character of the system, the doubly nonlinear differential inclusion and the nonsmooth domain,
	the structure of the model is very complex from an analytical point of view.
	
	A novel approach is introduced for proving existence of weak solutions for such degenerating coupled system.
	To this end, we first establish a suitable notion of weak solutions, which consists of weak formulations of the
	diffusion and the momentum balance equation, a variational inequality for the damage process and a total energy
	inequality.
	To show existence of weak solutions, several new ideas come into play.
	Various results on shrinking sets and its corresponding local Sobolev spaces are used. It turns out that, for instance,
	on open sets which shrink in time a quite satisfying analysis in Sobolev spaces is possible.
	The presented analysis can handle highly nonsmooth regions where complete damage takes place.
	To mention only one difficulty, infinitely many completely damaged regions which are not connected with
	the Dirichlet boundary may occur in arbitrary small time intervals.

\end{abstract}
{\it Key Words:}
Cahn-Hilliard system, phase separation, complete damage, 
elliptic-parabolic degenerating systems, linear elasticity,
weak solution, doubly nonlinear differential inclusions, existence results, rate-dependent systems.  \\[4mm]
{\it AMS Subject Classifications:}
    35K85, 
    35K55, 
    49J40, 
    49S05, 
    35J50,  
    74A45,	
    74G25, 
    34A12,  
    82B26,  
    82C26,  
    35K92,   
    35K65,  
    35K35;  

\rm

\section{Problem description}
\label{section:intro}
	Phase separation and damage processes occur in many fields, including material sciences, biology and chemical reactions.
	In particular, for the manufacturing and lifetime prediction of micro-electronic devices it is of great importance
	to understand the mechanisms and the interplay between phase-separation and damage processes in solder alloys.
	As soon as elastic alloys are quenched sufficiently, spinodal decomposition leads to a fine-grained structure of
	different chemical mixtures on a short time-scale (see \cite{DM01} for numerical simulations and experimental observations).
	The long-term evolution is determined by a chemical diffusion process which tends to minimize the bulk and the surface energy of the chemical substances.
	J.W. Cahn and J.E. Hilliard developed a phenomenological model for the kinetics of phase-separation in a thermodynamically consistent framework
	known as the Cahn-Hilliard equation \cite{CH58}, for which an extensive mathematical literature exists.
	An overview of modeling and analytical aspects of the Cahn-Hilliard equation can be found in \cite{El89}.
	The recent literature is mainly focused on coupled systems.
	For instance, physical observations and numerical simulations reveal that mechanical stresses influence the developing of shapes of the chemical phases.
	A coupling between Cahn-Hilliard systems and elastic deformations have
	been analytically studied in \cite{ GarckeHabil, Bonetti02, CMP00, Garcke052, Garcke05, Pawlow, Pawlow08}. For numerical results and simulations we refer to
	\cite{Wei02,Mer05,BB99,GRW01,BM2010}.
	Phase separation of the chemical components may also lead to critical stresses at phase boundaries due to swelling which result in cracks and formation of
	voids and are of particular interest to understand the aging process in solder materials, cf. \cite{HCW91,Ubachs07,Gee07, FK09}.
	A fully coupled system consisting of the Cahn-Hilliard equation, an
        elliptic equation for the displacement field and a differential
        inclusion for the damage variable has been recently investigated in
        \cite{WIAS1520, WIAS1569}.
	However, in \cite{WIAS1520, WIAS1569} and in the most mathematical damage literature \cite{BS04, Gia05, Mielke06, MT10, KRZ11}, it is usually assumed that damage cannot completely
	disintegrate the material (i.e. \textit{incomplete damage}). Dropping
        this assumption gives rise to many mathematical challenges. Therefore,
        global-in-time existence results for complete damage models are rare. 
	Modeling and existence of weak solutions for \textit{purely} mechanical complete damage systems with quasi-static force balances are studied in \cite{BMR09,Mie11,WIAS1722} and
	with visco-elasticity in \cite{Mielke10,RR12}.
	
	The main goal in the present work is to prove existence of weak
	solutions of a system \textit{coupling} a differential inclusion describing damage processes with an 
	elastic Cahn-Hilliard system in a small strain setting as in \cite{WIAS1520} but allowing for \textit{complete damage}
	and \textit{degenerating mobilities} with respect to the damage variable
	(see Theorem \ref{theorem:globalInTimeExistence}).
	Note that the mobility is still assumed to be independent of the concentration variable (see \cite{EG96} for an analysis of Cahn-Hilliard equations with degenerating
	and concentration dependent mobilities).
	The elasticity is considered to be linear and the system is assumed to
	be in quasi-static mechanical equilibrium since diffusion processes take place on a much slower time scale.
	The main modeling idea in \cite{WIAS1722} has been
	to formulate such degenerating system on a time-dependent domain which consists of the not completely damaged regions that are connected to the
	Dirichlet boundary.
	In this context, the concept of maximal admissible subsets is
	introduced to specify the domain of interest.

	In the following,
	we fix a bounded $\C C^2$-domain $\Omega\subseteq\R^n$ and a Dirichlet boundary part $D\subseteq\partial\Omega$ with $\C H^{n-1}(D)>0$.
	
	A relatively open subset $G$ of $\ol\Omega$ is called
        \textit{admissible} with respect to an
        $\C H^{n-1}$-measurable part $D$ of the boundary $\partial\Omega$ if every
	path-connected component $P_G$ of $G$ satisfies $\C H^{n-1}(P_G\cap D)>0$,
	where $\C H^{n-1}$ denotes the $(n-1)$-dimensional Hausdorff measure.
	The \textit{maximal admissible subset} of $G$ is denoted by $\frak A_D(G)$.
	With the notion of maximal admissible subsets, we can formulate our evolutionary system with a time-dependent domain.
	
	\textbf{Degenerating PDE system on a time-dependent domain.}
	
	\textit{Find
	functions
	}
	\begin{align*}
		c\in\mathcal C^2(F;\R),\;\quad u\in\mathcal C_x^2(F;\R^n),\;\quad z\in \C C^2(\ol{\Omega_T};\R),\;\quad \mu\in\mathcal C_x^2(F;\R)
	\end{align*}
	with the time-dependent domain $F=\{(x,t)\in\ol{\Omega_T}\,|\,x\in F(t)\}$ and $F(t)=\frak A_D\big(\{z(t)>0\}\big)$
	\textit{such that the PDE system}
	\begin{align*}
	\begin{split}
		&0=\mathrm{div}(W_{,e}(c,\epsilon(u),z)),\\
		&c_t=\mathrm{div}(m(z)\nabla\mu),\\
		&\mu= -\Delta c+\Psi_{,c}(c)+W_{,c}(c,\epsilon(u),z),\\
		&z_t+\xi-\mathrm{div}(|\nabla z|^{p-2}\nabla z)+W_{,z}(c,\epsilon(u),z)+f'(z)=0,\\
		&\xi\in\partial I_{(-\infty,0]}(z_t)
	\end{split}
	\end{align*}
	\textit{is satisfied pointwise in $\interior(F)$ with the initial-boundary conditions}\vspace*{0.5em}\\
	\begin{tabular}{p{0em}ll}
		&\renewcommand {\baselinestretch} {1.3} \normalsize
		\begin{minipage}{21em}
			{
			$c(0)=c^0$, $z(0)=z^0$\\
			$u(t)=b(t)$\\
			$W_{,e}(c(t),\epsilon(u(t)),z(t))\cdot\nu=0$\\
			$z(t)=0$\\
			$\nabla z(t)\cdot\nu=0$\\
			$\nabla c(t)\cdot\nu=0$\\
			$m(z(t))\nabla\mu(t)\cdot\nu=0$\\
			}
		\end{minipage}
		&\renewcommand {\baselinestretch} {1.3} \normalsize
		\begin{minipage}{18em}
			{
			\textit{in} $F(0)$,\\
			\textit{on} $\Gamma_1(t):=F(t)\cap D$,\\
			\textit{on} $\Gamma_2(t):=F(t)\cap (\partial\Omega\setminus D)$,\\
			\textit{on} $\Gamma_3(t):=\partial F(t)\setminus F(t)$,\\
			\textit{on} $\Gamma_1(t)\cup\Gamma_2(t)$,\\
			\textit{on} $\Gamma_1(t)\cup\Gamma_2(t)$,\\
			\textit{on} $\Gamma_1(t)\cup\Gamma_2(t)$.\\
			}
		\end{minipage}
	\end{tabular}
	
	Here, $\C C_x^2(F;\R^N)$ denotes the space of two times continuously
        differentiable functions with respect to the spatial variable $x$ on the set $F$.
        
	The solution of the PDE system can physically be interpreted as follows:
	$c$ denotes the chemical concentration difference of a two-component alloy, $\epsilon(u):=\frac12(\nabla u+(\nabla u)^T)$ the linearized strain tensor of the displacement $u$,
	$z$ the damage profile describing the degree of damage (i.e. $z=1$ undamaged and $z=0$ completely damaged material point) and $\mu$ the chemical potential.
	Moreover, $W$ denotes the elastic energy density, $\Psi$ the chemical energy density, $f$ a damage dependent potential,
	$m$ the mobility depending on the damage variable $z$ and $b$ the time-dependent Dirichlet boundary data for $D$.
	\textit{Capillarity effects} are modeled by concentration gradients in the free energy which provokes the chemical potential to depend on the Laplacian $\Delta c$
	(see \cite{Gu89}).
	
	We assume the following product structure for the elastic energy density:
	\begin{align}
	\label{eqn:elasticEnergyStr}
		W(c,e,z)=g(z)\varphi(c,e)
	\end{align}
	with a non-negative monotonically increasing function $g\in\C C^1([0,1];\R^+)$ such that the complete damage condition $g(0)=0$ is fulfilled.
	The second function $\varphi\in\C C^1(\R\times\R_\mathrm{sym}^{n\times n};\R^+)$ should have the following polynomial form
	\begin{align}
	\label{eqn:elasticEnergy}
		&\varphi(c,e)=\varphi^1e:e+\varphi^2(c):e+\varphi^3(c)
	\end{align}
	for coefficients $\varphi^1\in \mathcal L(\mathbb
        R_\mathrm{sym}^{n\times n})$ with $\varphi^1$ positive definite, 
	$\varphi^2\in \mathcal C^1(\mathbb R;\mathbb R_\mathrm{sym}^{n\times n})$
	and $\varphi^3\in \mathcal C^1(\mathbb R)$.
	Note that homogeneous elastic energy densities of the type
	$$W(c,e,z)=\frac 12z\mathbb C(e-e^\star(c)):(e-e^\star(c))$$
	with eigenstrain $e^\star$ depending on the concentration and stiffness tensor $\mathbb C$ are covered in this approach.
	We remark that this approach allows to incorporate swelling phenomena of the chemical species which, in turn,
	can lead to critical stresses for the initiation of damage.
	The mobility $m\in \C C([0,1];\R^+)$, on the other hand, should satisfy the following condition for degeneracy, i.e. 
	\begin{align}
		\label{eqn:assumptionMobility}
			m(z)=0\text{ if and only if } z=0.
	\end{align}
	\textbf{Plan of the paper}
	
	A weak formulation of the degenerating system is given in Section \ref{section:weakSolution} while the proof of existence is carried out in Section \ref{section:degLimit}
	and Section \ref{section:existence}.
	In the first step of the proof, a degenerated limit of the corresponding incomplete damage system coupled with elastic Cahn-Hilliard equations is performed (see 
	Section \ref{section:degLimit}).
	Due to the additional coupling the passage to the limit becomes quite involved and a
	\textit{conical Poincar\'e inequality} is used to control the chemical potential in a local sense.
	
	In the next step, we take material exclusions into account (see Section \ref{section:existence}). They occur when not completely damaged components become disconnected to the Dirichlet boundary.
	The exclusions lead to jumps in the energy inequality. By a concatenation property and by Zorn's lemma, we are able to prove
	the main theorem in this paper: a global-in-time existence result (see Theorem \ref{theorem:globalInTimeExistence}).
	In addition, several $\Gamma$-results and various properties on shrinking sets and their corresponding local Sobolev spaces are used.
	
\section{Assumptions and notion of weak solutions}
\label{section:weakSolution}
	As before, $\Omega\subseteq\R^n$ denotes a bounded $\C C^2$-domain and $D\subseteq\partial\Omega$ the Dirichlet boundary with $\C H^{n-1}(D)>0$.
	Furthermore, $T>0$ denotes the maximal time of interest and $p>n$ a fixed exponent.

	The weak formulation of the PDE system presented in this work will be based on an energetic approach and uses the associated free energy.
	Let $G\subseteq\ol\Omega$ be a relatively open subset. Then, the free energy $\C E$ on $G$ is given by 
	$$
		\C E_G(c,e,z):=\int_{G} \Big( \frac 1p|\nabla z|^p+\frac
                12|\nabla c|^2+\Psi(c)+W(c,e,z)+f(z)\Big) \dx
	$$
	for $c\in H^1(G)$, $e\in L^2(G;\R_\mathrm{sym}^{n\times n})$ and $z\in W^{1,p}(G)$.
	
	We suppose the structural assumptions \eqref{eqn:elasticEnergyStr} and \eqref{eqn:elasticEnergy}.
	Here, $\Psi\in C^1(\R)$ denotes the chemical energy density, $f\in C^1([0,1])$ a damage dependent potential
	and $g\in C^1([0,1];\R^+)$ with $g(0)=0$ a function modeling the influence of the damage on the material stiffness.
	We assume the following growth conditions:
	\begin{subequations}
	\begin{align}
	\label{eqn:assumptionPhi2}
		|\varphi^2(c)|,|\varphi^{2}_{,c}(c)|&\leq C(1+|c|),\\
	\label{eqn:assumptionPhi3}
		|\varphi^3(c)|,|\varphi^{3}_{,c}(c)|&\leq C(1+|c|^2),\\
	\label{eqn:assumptionPsi}
		|\Psi_{,c}(c)|&\leq C(1+|c|^{2^\star/2}),\\
	\label{eqn:assumptiong}
		\eta&\leq g'(z).
	\end{align}
	\end{subequations}
	The constants $\eta,C>0$ are independent of $c$ and $z$, and
	$2^\star$ denotes the Sobolev critical exponent.
	In the case $n=2$, $\Psi_{,c}$ has to satisfy an $r$-growth condition for a fixed arbitrary $r>0$ whereas
	we have no restrictions on $\Psi_{,c}$ in the one-dimensional case.

	The functions $m\in \C C([0,1];\R^+)$ will denote the mobility of the Cahn-Hilliard diffusion process
	and is assumed to satisfy \eqref{eqn:assumptionMobility}.

	Before we state the precise notion of weak solutions, we introduce some notation.
        
	The set $\{f>0\}$ for a given function $f\in W^{1,p}(\Omega)$ has to be read as $\{x\in\ol\Omega\,|\,f(x)>0\}$ by employing the embedding
	$W^{1,p}(\Omega)\hookrightarrow\C C(\ol\Omega)$ (because of
        $p>n$). The symbol $H^1_{S}(G)$ denotes the subspace of $H^1(G)$ with zero
        trace on $S$, where  $G \subset \R^n$ is open and $S$ is an $\C
        H^{n-1}$-measurable subset of $\partial \Omega$.
        
        Let $I \subseteq \R$ be an open interval. The subspace
        $SBV(I;X)\subseteq BV(I;X)$ 
        of special functions of bounded variation is defined as the space of functions
	$f\in BV(I;X)$ where the decomposition
	\begin{align*}
		\mathrm df = f'\C L^1 + (f^+-f^-)\C H^0\lfloor J_f
	\end{align*}
	for an $f'\in L^1(I;X)$ exists.
	The function $f'$ is called the absolutely continuous part of the differential measure
	and we also write $\partial_t^\mathrm{a}f$ and $J_f$ denotes the jump set.
	If, additionally, $\partial_t^\mathrm{a}f\in L^p(I;X)$, $p\geq 1$, we write $f\in SBV^p(I;X)$.
	
	We say that a relatively open subset $F\subseteq\ol{\Omega_T}$ is shrinking iff
	$F(t)\subseteq F(s)$ for all $0\leq s\leq t\leq T$, where $F(t)$ denotes the $t$-cut of $F$.
	For the analysis of our proposed system, we introduce local Sobolev functions.
	The space-time local Sobolev space $L_t^2
	H_{x,\mathrm{loc}}^1(F;\R^N)$ for a shrinking set $F$ is given by
	\begin{align*}
	  L_t^2H^q_{x,\mathrm{loc}}(F;\R^N)
	  :=\Big\{& \! v:\!F\rightarrow\R^N\;\Big|\;
	  \forall t\in(0,T],\,\forall U\compact F(t)\,\text{open}:\;
	    v|_{U\times (0,t)}\in L^{2}(0,t;H^{q}(U;\R^N))\Big\}.
	\end{align*}
	
	In the following, a weak formulation of the system above combining the ideas in \cite{WIAS1520} and \cite{WIAS1722} is given.
	
	\begin{definition}[Weak solution of the coupled PDE system] A quadruple $(c,u,z,\mu)$ is called a weak solution
		with the initial-boundary data $(c^0,z^0,b)$ if
		\label{def:weakSolution}
		\begin{enumerate}
			\renewcommand{\labelenumi}{(\roman{enumi})}
			\item
				\textit{Spaces:}
				\begin{align*}
				\begin{aligned}
					&c\in L^\infty(0,T;H^1(\Omega))\cap H^1(0,T;(H^1(\Omega))^\star),\;&&c(0)=c^0,\\
					&u\in L_t^2 H_{x,\mathrm{loc}}^1(F;\R^n),\;&&u=b\text{ on }D_T\cap F,\\
					&z\in L^\infty(0,T;W^{1,p}(\Omega))\cap SBV^2(0,T;L^2(\Omega)),\;&& z(0)=z^0,\;z^+(t)=z^-(t)\mathds 1_{F(t)}\\
					&\mu\in L_t^2 H_{x,\mathrm{loc}}^1(F)
				\end{aligned}
				\end{align*}
				with $e:=\epsilon(u)\in L^2(F;\R_\mathrm{sym}^{n\times n})$ where $F:=\frak A_D(\{z^->0\})\subseteq\ol{\Omega_T}$ is a shrinking set\\
				($z^+(t):=\lim_{s\downarrow t}z(s)$ and $z^-(t):=\lim_{s\uparrow t}z(s)$ in $L^2(\Omega)$).
			\item
				\textit{Quasi-static mechanical equilibrium:}
				\begin{align}
				\label{eqn:forceBalanceWeak}
					0=\int_{F(t)}W_{,e}(c(t),e(t),z(t)):\epsilon(\zeta)\dx
				\end{align}
				for a.e. $t\in(0,T)$ and for all $\zeta\in H_{D}^1(\Omega;\R^n)$.
			\item
				\textit{Diffusion:}
				\begin{align}
				\label{eqn:diffusion}
					\int_{\Omega_T}\partial_t\zeta(c-c^0)\dxt=\int_{F}m(z)\nabla\mu\cdot\nabla\zeta\dxt
				\end{align}
				for all $\zeta\in L^2(0,T;H^1(\Omega))\cap H^1(0,T;L^2(\Omega))$ with $\zeta(T)=0$ and
				\begin{align}
				\label{eqn:chemicalPotential}
					\int_{F}\mu\zeta\dxt=\int_{F}\Big(\nabla c\cdot\nabla\zeta+\Psi_{,c}(c)\zeta+W_{,c}(c,e,z)\zeta\Big)\dxt
				\end{align}
				for all $\zeta\in L^2(0,T;H^1(\Omega))$ with $\mathrm{supp}(\zeta)\subseteq F$.
			\item
				\textit{Damage variational inequality: }
				\begin{align}
				\label{eqn:VI}
					&0\leq\int_{F(t)}\Big(|\nabla z(t)|^{p-2}\nabla z(t)\cdot\nabla\zeta+\big(W_{,z}(c(t),e(t),z(t))+f'(z(t))+\partial_t^\mathrm{a} z(t)\big)\zeta\Big)\dx\\
					&0\leq z(t),\notag\\
					&0\geq\partial_t^\mathrm{a} z(t)\notag
				\end{align}
				for a.e. $t\in(0,T)$ and for all $\zeta\in W^{1,p}(\Omega)$ with $\zeta\leq 0$.
			\item
				\textit{Energy inequality: }
				\begin{align}
				\label{eqn:EI}
					\C E(t)+\C J(0,t)+\C D(0,t)\leq \C E^+(0)+\C W_\mathrm{ext}(0,t),
				\end{align}
				where the terms in the energy inequality are as follows:
				\begin{align*}
					&\textit{\textbullet\;Energy: }&&\C E(t):=\C E_{F(t)}(c(t),e(t),z(t)),\\
					&\textit{\textbullet\;Energy jump term: }&&\C J(0,t):=\sum_{s\in J_{z}\cap (0,t]}\left(\C E^-(s)-\C E^+(s)\right),\\
						&&&\C E^-(t):=\lim_{s\to t^-}\Big(\mathop\essinf_{\tau\in(s,t)}\C E(\tau)\Big),\\
						&&&\C E^+(t):=\frak e_t^+\\
						&&&\text{with }0\leq \frak e_t^+\leq \inf_{\zeta\in H_{D\cap F(t)}^1(F(t);\R^n)}\C E_{F(t)}(c(t),\e(b(t)+\zeta),z(t)),\\
					&\textit{\textbullet\;Dissipated energy: }&&\C D(0,t):=\int_0^t\int_{F(s)}\Big(m(z)|\nabla\mu|^2+|\partial_t^\mathrm{a} z|^2\Big)\dxt,\\
					&\textit{\textbullet\;External work: }&&\C W_\mathrm{ext}(0,t):=\int_0^t\int_{F(s)}W_{,e}(c,\e(u),z):\e(\partial_t b)\dxt.
				\end{align*}
		\end{enumerate}
	\end{definition}
	\begin{remark}
	\label{remark:weakSolution}
	Under additional regularity assumptions, a weak solution
	satisfies
	\begin{align*}
		&\lim_{\tau\to s^-}\C E_{F(\tau)}(c(\tau),e(\tau),z(\tau))
			=\lim_{\tau\rightarrow s^-}\mathop\mathrm{ess\,inf}_{\vartheta\in (\tau,s)}\C E_{F(\vartheta)}(c(\vartheta),e(\vartheta),z(\vartheta)),\\
		&\lim_{\tau\to s^+}\C E_{F(\tau)}(c(\tau),e(\tau),z(\tau))=\frak e_s^+
	\end{align*}
	and reduces to the pointwise classical formulation presented in Section \ref{section:intro}
	(cf. \cite[Theorem 3.7]{WIAS1722}).
	\end{remark}

	As mentioned before, the main aim of this paper is to prove global-in-time existence of weak solutions.
	Due to the analytical quite involved structure, we introduce the notion
	of weak solutions with a given fineness constant $\eta>0$.
	\begin{definition}[Weak solution with fineness ${\bm \eta}$]
	\label{def:approxWeakSolution}
		A tuple $(c,e,u,z,\mu)$ together with a shrinking set $F\subseteq\ol{\Omega_T}$ is called a weak solution with fineness $\eta>0$
		if
		\begin{align*}
		\begin{aligned}
			&c\in L^\infty(0,T;H^1(\Omega))\cap H^1(0,T;(H^1(\Omega))^\star),\;&&c(0)=c^0,\\
			&u\in L_t^2 H_{x,\mathrm{loc}}^1(\frak A_D(F);\R^n),\;&&u=b\text{ on }D_T\cap \frak A_D(F),\\
			&z\in L^\infty(0,T;W^{1,p}(\Omega))\cap SBV^2(0,T;L^2(\Omega)),\;&&z(0)=z^0,\;z^+(t)=z^-(t)\mathds 1_{F(t)}\\
			&\mu\in L_t^2 H_{x,\mathrm{loc}}^1(F)
		\end{aligned}
		\end{align*}
		such that
		$e=\epsilon(u)$ in $\frak A_D(F)$ and 
		the properties (ii)-(v) of Definition \ref{def:weakSolution} and
		\begin{align*}
			&\forall t\in[0,T]:&&\frak A_D(\{z^-(t)>0\})\subseteq F(t)\text{ and }\C L^n\big(F(t)\setminus \frak A_D(\{z^-(t)>0\}))\big)<\eta,\\
			&\forall t\in[0,T]\setminus \bigcup_{t\in C_{z^\star}}[t,t+\eta)&&\frak A_D(\{z^-(t)>0\})=F(t).
		\end{align*}
		are satisfied.
	\end{definition}
	Here, $C_{z^\star}$ denotes the set of cluster points from the right of the jump set $J_{z^{\star}}$
	of the function $z^{\star}\in SBV^2(0,T;L^2(\Omega))$ given by $z^{\star}(t):=z(t)\mathds 1_{\frak A_D(\{z^-(t)>0\})}$, i.e.,
	$z^\star$ is the damage profile of $z$ restricted to the time-dependent domain $\frak A_D(\{z^->0\})$
	($\mathds 1_A:X\rightarrow\{0,1\}$ denotes the characteristic function of a set $A\subseteq X$).
	
	\begin{remark}
		\begin{itemize}
			\item[(i)]
				If a weak solution $(c,e,u,z,\mu)$ on $F$ with fineness $\eta$ according to Definition \ref{def:approxWeakSolution}
				satisfies $C_{z^{\star}}=\emptyset$ then $(c,u,z,\mu)$ is a weak solution
				according to Definition \ref{def:weakSolution}.
			\item[(ii)] 
				If the initial value for the damage profile contains no completely damaged parts on $\ol\Omega$, we are also able
				to establish maximal local-in-time existence of weak solutions. More precisely,
				there exist a maximal value $\widehat T>0$ with $\widehat T\leq T$
				and functions $c$, $u$, $z$ and $\mu$ defined on the time interval $[0,\widehat T]$
				such that $(c, u,z, \mu)$ is a weak solution according to Definition \ref{def:weakSolution}.
				Therefore, if $\widehat T<T$, $(c,u,z,\mu)$ cannot be extended
				to a weak solution on $[0,\widehat T+\varepsilon]$.
		\end{itemize}
	\end{remark}
	The remaining part of this paper is devoted to establish a global existence result in the sense of Definition \ref{def:approxWeakSolution}
	(see Theorem \ref{theorem:globalInTimeExistence}).

\section{Degenerate limit of the regularized system}
\label{section:degLimit}
	In this section, we will start with a corresponding incomplete damage
        model coupled to an elastic Cahn-Hilliard system and then perform a limit procedure.
	For each $\varepsilon>0$, we define the regularized free energy $\C E_\varepsilon$ as
	\begin{align*}
		\C E_\varepsilon(c,e,z):=\int_{\Omega}\Big(\frac 1p|\nabla z|^p+\frac 12|\nabla c|^2+\Psi(c)+W^\varepsilon(c,e,z)+f(z)\Big)\dx
	\end{align*}
	for functions $c\in H^1(\Omega)$, $e\in
        L^2(\Omega;\R_\mathrm{sym}^{n\times n})$, $z\in W^{1,p}(\Omega)$. 
	The regularized elastic energy density and mobility are given by
	\begin{align*}
	\begin{split}
		W^\varepsilon(c,e,z)&:=(g(z)+\varepsilon)\varphi(c,e),\\
		m^\varepsilon(z)&:=m(z)+\varepsilon.
	\end{split}
	\end{align*}
	
	A modification of the proof of Theorem 4.6 in \cite{WIAS1520} yields the following result.
	\begin{theorem}[$\varepsilon$-regularized coupled PDE problem]
	\label{theorem:regProblem}
		Let $\varepsilon>0$.
		For given initial-boun\-dary data $c_\varepsilon^0\in H^1(\Omega)$, $z_\varepsilon^0\in W^{1,p}(\Omega)$
		and $b_\varepsilon\in W^{1,1}(0,T;W^{1,\infty}(\Omega;\R^n))$
		there exists a quadruple
		$q_\varepsilon=(c_\varepsilon,u_\varepsilon,z_\varepsilon, \mu_\varepsilon)$ such that
		\begin{enumerate}
			\renewcommand{\labelenumi}{(\roman{enumi})}
			\item
				\textit{Spaces:}
				\begin{align*}
				\begin{aligned}
					&c_\varepsilon\in L^\infty(0,T;H^1(\Omega))\cap H^1(0,T;(H^1(\Omega))^\star),\;&&c_\varepsilon(0)=c_\varepsilon^0,\\
					&u_\varepsilon\in L^\infty(0,T;H^1(\Omega;\mathbb R^n)),\;&&u_\varepsilon=b_\varepsilon\text{ on }D_T,\\
					&z_\varepsilon\in L^\infty(0,T;W^{1,p}(\Omega))\cap H^1(0,T;L^2(\Omega)),\;&& z_\varepsilon(0)=z_\varepsilon^0\\
					&\mu_\varepsilon\in L^2(0,T;H^1(\Omega))
				\end{aligned}
				\end{align*}
			\item
			\textit{Quasi-static mechanical equilibrium:}
				\begin{equation}
				\label{eqn:ID1}
					\int_{\Omega} W_{,e}^\varepsilon(c_\varepsilon(t),\epsilon(u_\varepsilon(t)),z_\varepsilon(t)):\epsilon(\zeta)\,\mathrm dx=0
				\end{equation}
				for a.e. $t\in(0,T)$ and for all $\zeta\in H_D^{1}(\Omega;\mathbb R^n)$.
			\item 
			\textit{Diffusion:}
				\begin{equation}
				\label{eqn:ID2}
					\int_{\Omega_T}(c_\varepsilon-c_\varepsilon^0)\partial_t\zeta\dxt
						=\int_{\Omega_T}m^\varepsilon(z_\varepsilon)\nabla \mu_\varepsilon\cdot\nabla\zeta\dxt
				\end{equation}
				for all $\zeta\in L^2(0,T;H^1(\Omega))$ with $\partial_t\zeta\in L^2(\Omega_T)$ and $\zeta(T)=0$ and
				\begin{align}
						\int_{\Omega} \mu_\varepsilon(t)\zeta\dx
							={}&\int_{\Omega}\Big(\nabla c_\varepsilon(t)\cdot\nabla\zeta+\Psi_\mathrm{,c}(c_\varepsilon(t))\zeta
					\label{eqn:ID3}
						+W_\mathrm{,c}^\varepsilon(c_\varepsilon(t),\epsilon(u_\varepsilon(t)),z_\varepsilon(t))\zeta\Big)\dx
				\end{align}
				for a.e. $t\in(0,T)$ and for all $\zeta\in H^1(\Omega)$.
			\item
			\textit{Damage variational inequality: }
				\begin{align}
				\label{eqn:ID4}
					&0\leq\int_{\Omega}\Big(|\nabla z_\varepsilon(t)|^{p-2}\nabla z_\varepsilon(t)\cdot\nabla\zeta
						+\big(W_{,z}^\varepsilon(c_\varepsilon(t),\epsilon(u_\varepsilon(t)),z_\varepsilon(t))+f'(z_\varepsilon(t))
						+\partial_t z_\varepsilon(t)+r_\varepsilon(t)\big)\zeta\Big)\dx\\
					&0\leq z_\varepsilon(t),\notag\\
					&0\geq\partial_t z_\varepsilon(t)\notag
				\end{align}
				for a.e. $t\in(0,T)$ and for all $\zeta\in W^{1,p}(\Omega)$ with $\zeta\leq 0$,
				where $r_\varepsilon\in L^1(\Omega_T)$ is given by
				\begin{align}
				\label{eqn:rstructure}
					r_\varepsilon=-\chi_\varepsilon \big(W_{,z}^\varepsilon(c_\varepsilon,\epsilon(u_\varepsilon),z_\varepsilon)+f'(z)\big)^+
				\end{align}
				with $\chi_\varepsilon\in L^\infty(\Omega)$ fulfilling $\chi_\varepsilon=0$ on $\{z_\varepsilon>0\}$ and
				$0\leq\chi_\varepsilon\leq 1$ on $\{z_\varepsilon=0\}$.
		\item
			\textit{Energy inequality: }
				\begin{align}
					& \C E_\varepsilon(c_\varepsilon(t),\epsilon(u_\varepsilon(t)),z_\varepsilon(t))
						+\int_{\Omega_t}\Big(m^\varepsilon(z_\varepsilon)|\nabla \mu_\varepsilon|^2+|\partial_t z_\varepsilon|^2\Big)\dxs\notag\\
				\label{eqn:ID5}
					&\qquad\qquad\leq \C E_\varepsilon(c_\varepsilon^0,\epsilon(u_\varepsilon^0),z_\varepsilon^0)
						+\int_{\Omega_t} W_{,e}(c_\varepsilon,\epsilon(u_\varepsilon),z_\varepsilon):
						\epsilon(\partial_t b_\varepsilon)\dxs
				\end{align}
				holds for a.e. $t\in(0,T)$ where $u_\varepsilon^0$ minimizes $ \C E_\varepsilon(c^0,\epsilon(\cdot),z_\varepsilon^0)$ in $H^1(\Omega;\R^n)$
				with Dirichlet data $b_\varepsilon^0:=b_\varepsilon(0)$ on $D$.
			\end{enumerate}
	\end{theorem}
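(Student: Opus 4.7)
The plan is to follow the semi-discretization strategy of \cite{WIAS1520}, adapted to the $z$-dependent mobility and the additional damage source term. Since $\varepsilon>0$ is fixed throughout, both $g(z)+\varepsilon$ and $m(z)+\varepsilon$ are uniformly bounded away from zero, so the essential difficulties are not degeneracy but rather the strong coupling of the four unknowns, the unilateral constraint $\partial_t z\leq 0$, and the obstacle multiplier $r_\varepsilon$ enforcing $z\geq 0$.

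First I would partition $[0,T]$ by $t_k^\tau=k\tau$ and, given $(c_{k-1}^\tau,z_{k-1}^\tau)$, construct the update in a staggered fashion. The displacement $u_k^\tau$ is the unique minimizer of $v\mapsto\int_\Omega W^\varepsilon(c_{k-1}^\tau,\epsilon(v),z_{k-1}^\tau)\dx$ over $b_\varepsilon(t_k^\tau)+H_D^{1}(\Omega;\R^n)$, which exists by Korn's inequality together with $g(z)+\varepsilon\geq\varepsilon$ and positive definiteness of $\varphi^1$. The damage $z_k^\tau$ is defined as the minimizer of
\[
z\mapsto \int_\Omega\Big(\tfrac{1}{p}|\nabla z|^p+W^\varepsilon(c_{k-1}^\tau,\epsilon(u_k^\tau),z)+f(z)\Big)\dx+\tfrac{1}{2\tau}\|z-z_{k-1}^\tau\|_{L^2(\Omega)}^2
\]
over the closed convex set $\{z\in W^{1,p}(\Omega):0\leq z\leq z_{k-1}^\tau\}$, which simultaneously incorporates the irreversibility $\partial_t z\leq 0$ and the physical bound $z\geq 0$. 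The Karush--Kuhn--Tucker conditions for this constrained minimization yield the discrete analogue of \eqref{eqn:ID4} with a multiplier of the form \eqref{eqn:rstructure}, since the lower obstacle is active only on $\{z_k^\tau=0\}$. Finally $(c_k^\tau,\mu_k^\tau)$ is obtained from a discrete mixed elliptic problem that mirrors \eqref{eqn:ID2}--\eqref{eqn:ID3} with $z_k^\tau$ in the mobility; the bound $m^\varepsilon(z_k^\tau)\geq\varepsilon$ makes this a standard uniformly elliptic problem for $\mu_k^\tau$ whose solvability is classical.

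Piecewise-constant and piecewise-affine interpolants in $\tau$ then satisfy a discrete energy-dissipation inequality, obtained by inserting $(z_{k-1}^\tau,b_\varepsilon(t_{k-1}^\tau))$ as competitors into the respective minimization problems and exploiting the quadratic structure \eqref{eqn:elasticEnergy}. This yields the uniform a priori bounds: $c_\tau\in L^\infty(0,T;H^1)\cap H^1(0,T;(H^1)^\star)$, $z_\tau\in L^\infty(0,T;W^{1,p})\cap H^1(0,T;L^2)$, $\mu_\tau\in L^2(0,T;H^1)$, and $u_\tau\in L^\infty(0,T;H^1(\Omega;\R^n))$ by Korn, while the external-work term is absorbed by Gronwall thanks to $b_\varepsilon\in W^{1,1}(0,T;W^{1,\infty})$.

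Passage $\tau\to 0$ uses Aubin--Lions to get strong $L^2(\Omega_T)$ convergence of $c_\tau$ and $z_\tau$, which transfers to a.e.\ convergence of $\Psi_{,c}(c_\tau)$, $W_{,c}^\varepsilon$, $g(z_\tau)$ and $m^\varepsilon(z_\tau)$. The main obstacle is identifying the limit of $W_{,e}^\varepsilon(c_\tau,\epsilon(u_\tau),z_\tau)$ appearing both in the momentum balance \eqref{eqn:ID1} and in the damage inequality \eqref{eqn:ID4}: this requires strong convergence of $\epsilon(u_\tau)$ in $L^2$, which I would establish by testing \eqref{eqn:ID1} with $u_\tau-u$, using the product form $W^\varepsilon=(g+\varepsilon)\varphi$, the strict convexity of $\varphi$ in $e$, and the strong convergence of $c_\tau$ and $z_\tau$. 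With this strong convergence secured, the damage variational inequality follows from weak lower-semicontinuity of the $p$-gradient term and passage to the limit in the KKT multipliers, preserving the structure \eqref{eqn:rstructure}; the energy inequality \eqref{eqn:ID5} follows by weak lower-semicontinuity on the left and strong convergence of the external-work term on the right.
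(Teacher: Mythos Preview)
Your route differs from the paper's in two structural ways, and one of them hides a real difficulty.

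The paper does \emph{not} stagger the updates. At each time step it minimizes a single functional $\mathbb E^k(c,u,z)$ jointly over $(c,u,z)$, with two additional viscous regularizations (a $\delta|\nabla u|^4$ term and a $\delta\|(c^k-c^{k-1})/\tau\|_{L^2}^2$ term), and the Cahn--Hilliard structure is encoded through a weighted dual norm $\|\cdot\|_{X(z^{k-1})}$ built from the operator $Au=\langle m^\varepsilon(z^{k-1})\nabla u,\nabla\cdot\rangle$. Two limits are then taken in order, $\tau\to 0$ followed by $\delta\to 0$. The payoff of this heavier machinery is that the discrete energy inequality is an immediate consequence of minimality: comparing $(c^k,u^k,z^k)$ with $(c^{k-1},u^{k-1}+b^k-b^{k-1},z^{k-1})$ gives \eqref{eqn:ID5} at the discrete level without any convexity assumptions on $\Psi$ or on $c\mapsto\varphi^3(c)$.

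In your staggered scheme, the displacement and damage updates are genuine minimizations and chain correctly, but your Cahn--Hilliard step is described as ``a discrete mixed elliptic problem'' rather than a minimization. To extract the energy decrease for this step you would test the discrete $c$-equation with $\mu_k$ and the $\mu$-equation with $(c_k-c_{k-1})/\tau$; this yields an inequality of the form
\[
\int_\Omega\Big(\tfrac12|\nabla c_k|^2+\Psi(c_k)+W^\varepsilon(c_k,\epsilon(u_k),z_k)\Big)\dx+\tau\int_\Omega m^\varepsilon(z_k)|\nabla\mu_k|^2\dx\leq\int_\Omega\Big(\tfrac12|\nabla c_{k-1}|^2+\Psi(c_{k-1})+W^\varepsilon(c_{k-1},\epsilon(u_k),z_k)\Big)\dx
\]
\emph{only if} $c\mapsto\Psi(c)+W^\varepsilon(c,\epsilon(u_k),z_k)$ is convex, which the growth assumptions \eqref{eqn:assumptionPhi3}--\eqref{eqn:assumptionPsi} do not guarantee. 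You would need either a convex--concave splitting of $\Psi$ and $\varphi^3$ (with the concave part taken explicitly), or to recast the $c$-update itself as a constrained minimization in the spirit of the paper. As written, the sentence ``obtained by inserting $(z_{k-1}^\tau,b_\varepsilon(t_{k-1}^\tau))$ as competitors into the respective minimization problems'' does not cover the $c$-variable, and this is precisely the gap that the paper's fully coupled minimization with the $X(z^{k-1})$-norm is designed to close.

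On the positive side, your staggered decoupling does remove the need for the $\delta|\nabla u|^4$ regularization, since with $c,z$ frozen Korn's inequality and $g(z)+\varepsilon\geq\varepsilon$ already give coercivity for the $u$-problem; this is a genuine simplification over the paper's double limit.
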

	\begin{proof}
		The existence theorem presented in \cite{WIAS1520} can be adapted
		to our situation by considering the viscous semi-implicit time-discretized system (in a classical notation; we omit the $\varepsilon$-dependence
		in the notation for the discrete solution at the moment):
		\begin{align*}
			0&=\mathrm{div}\big(W_{,e}^\varepsilon(c^k,\epsilon(u^k),z^k)\big)+\delta\,\mathrm{div}(|\nabla u^k|^2\nabla u^k),\\
			\frac{c^k-c^{k-1}}{\tau}&=\mathrm{div}(m^\varepsilon(z^{k-1})\nabla\mu^k),\\
			\mu^k&= -\Delta c^k+\Psi_{,c}(c^k)+W_{,c}^\varepsilon(c^k,\epsilon(u^k),z^k)+\delta\frac{c^k-c^{k-1}}{\tau},\\
			\frac{z^k-z^{k-1}}{\tau}+\xi+\zeta&=\mathrm{div}(|\nabla z^k|^{p-2}\nabla z^k)+W_{,z}^\varepsilon(c^k,\epsilon(u^k),z^k)+f'(z^k),
		\end{align*}
		with the sub-gradients $\xi\in \partial I_{(-\infty,0]}((z^k-z^{k-1})/\tau)$, $\zeta\in \partial I_{[0,\infty)}(z^k)$
		and the discretization fineness $\tau=T/M$ for $M\in\N$.
		The discrete equations can be obtained recursively starting from $(c^0,u^0,z^0)$
		with $u^0:=\argmin_{u\in H^1(\Omega;\R^n),\,u|_D=b^0|_D}\C E_\varepsilon(c^0,u,z^0)$
		by considering the Euler-Lagrange equations of the functional
		\begin{align*}
		\begin{split}
			\mathbb E^k(c,u,z):={}&\C E_\varepsilon(c,u,z)+\int_\Omega \frac \delta4|\nabla u|^4\dx\\
			&+\frac \tau2\left(\left\|\frac{z-z^{k-1}}{\tau}\right\|_{L^2(\Omega)}^2
				+\left\|\frac{c-c^{k-1}}{\tau}\right\|_{X(z^{k-1})}^2+\delta\left\|\frac{c-c^{k-1}}{\tau}\right\|_{L^2(\Omega)}^2\right)
		\end{split}
		\end{align*}
		defined on the subspace of $H^1(\Omega) \times W^{1,4}(\Omega;\mathbb R^n)\times W^{1,p}(\Omega)$ with the conditions
		$u|_D=b(k\tau)|_D$, $\int_\Omega(c-c^0)\,\mathrm dx=0$ and
		$0\leq z\leq z^{k-1}$ a.e. in $\Omega$.
		The weighted scalar product $\langle \cdot,\cdot\rangle_{X(z^{k-1})}$ is given by
		\begin{align*}
			\langle u,v\rangle_{X(z^{k-1})}:=\left\langle m^\varepsilon(z^{k-1})\nabla A^{-1} u,\nabla A^{-1}v\right\rangle_{L^2(\Omega)}
		\end{align*}
		with the operator $A:V_0\rightarrow \tilde{V}_0$,
		$Au:=\left\langle m^\varepsilon(z^{k-1})\nabla u,\nabla\cdot\right\rangle_{L^2(\Omega)}$ and the spaces
		\begin{equation*}
			\begin{split}
				V_0&:=\left\{\zeta\in H^1(\Omega)\,\big|\,\int_\Omega \zeta\,\mathrm dx=0
					\right\},\\
				\tilde{V}_0&:=\left\{\zeta\in (H^1(\Omega))^*\,\big|\,
					\left\langle \zeta,\mathbf 1\right\rangle_{(H^1)^*\times H^1}=0\right\}.
			\end{split}
		\end{equation*}
		
		After passing the discretization fineness to $0$, i.e. $\tau\rightarrow 0^+$, we obtain the corresponding regularized equations
		and inequalities for \eqref{eqn:ID1}-\eqref{eqn:ID5}.
		A further passage $\delta\rightarrow 0^+$ yields a weak solution as required.
		\ep
	\end{proof}\\
	In the following, we always consider a given countable sequence $\varepsilon_k\to 0^+$ as $k\to\infty$ and omit the subscript $k$
	by simply writing $\varepsilon\to 0^+$.
	
	The existence proof is based on $\Gamma$-limit techniques of the \textit{reduced}
        energy functionals of $\C E_\varepsilon$ and $\C F_\varepsilon$ with $\C
        F_\varepsilon(c,e,z):=\int_{\Omega} W^\varepsilon(c,e,z)\dx$ in order to
	gain a suitable energy estimate in the limit $\varepsilon\rightarrow 0^+$.
        The reduced energy functionals $\frak E_\varepsilon$ and $\frak
        F_\varepsilon$ are defined by
	\begin{align*}
			\frak E_\varepsilon(c,\xi,z)&:=
			\begin{cases}
				\displaystyle\min_{\zeta\in H_D^1(\Omega;\R^n)}\C E_\varepsilon(c,\epsilon(\xi+\zeta),z)
				&\text{if }0\leq z\leq 1,\\
				\infty&\text{else,}
			\end{cases}\\
			\frak F_\varepsilon(c,\xi,z)&:=
			\begin{cases}
				\displaystyle\min_{\zeta\in H_D^1(\Omega;\R^n)}\C F_\varepsilon(c,\epsilon(\xi+\zeta),z)
				&\text{if }0\leq z\leq 1,\\
				\infty&\text{else}.
			\end{cases}
	\end{align*}
	The $\Gamma$-limits of $\frak E_\varepsilon$ and $\frak F_\varepsilon$ as $\varepsilon\rightarrow 0^+$
	exist in the topological space $H_\mathrm{w}^1(\Omega)\times W^{1,\infty}(\Omega;\R^n)\times W_\mathrm{w}^{1,p}(\Omega)$ and
	are denoted by $\frak E$ and $\frak F$, respectively.
	Here, $H_\mathrm{w}^1(\Omega)$ denotes the space $H^1(\Omega)$ with its weak topology.
	The limit functional $\frak F$ is needed as an auxiliary construction in the following because it already captures the essential properties of $\frak E$.
	In the next section, we are going to prove some properties of the $\Gamma$-limit $\frak E$ which are used in the global-in-time existence
	proof.
	
	Let $(c_\varepsilon^0,b_\varepsilon^0,z_\varepsilon^0)\rightarrow (c^0,b^0,z^0)$ as $\varepsilon\rightarrow 0^+$
	be a recovery sequence for $\frak E_\varepsilon \xrightarrow{\Gamma}\frak E$.
	In particular, $c_\varepsilon^0\weaklim c^0$ in $H^1(\Omega)$, $b_\varepsilon^0\rightarrow b^0$ in $W^{1,\infty}(\Omega;\R^n)$
	and $z_\varepsilon^0\weaklim z^0$ in $W^{1,p}(\Omega)$.
	Furthermore, we set $b_\varepsilon:=b-b^0+b_\varepsilon^0$.
	For each $\varepsilon>0$, we obtain a weak solution $(c_\varepsilon,u_\varepsilon,z_\varepsilon,\mu_\varepsilon)$ for
	$(c_\varepsilon^0,z_\varepsilon^0,b_\varepsilon)$ according to Theorem \ref{theorem:regProblem}.

        By means of the energy estimate \eqref{eqn:ID5},  
        we deduce the following a-priori estimates for our proposed model:
	\begin{lemma} 
	\label{lemma:apriori}
	There exists a constant $C>0$ independent of
	$\varepsilon$ such that
	\begin{itemize}
		\item[(i)]
			$\sup_{t\in[0,T]}\|c_\varepsilon(t)\|_{H^1(\Omega)}\leq C$,
		\item[(ii)]
			$\|\widehat e_\varepsilon\|_{L^2(\Omega_T;\mathbb R^{n\times n})}\leq C$
			with $\widehat e_\varepsilon:=e_\varepsilon\mathds{1}_{\{z_\varepsilon>0\}}$,
		\item[(iii)]
			$\sup_{t\in[0,T]}\|z_\varepsilon(t)\|_{W^{1,p}(\Omega)}\leq C$,
		\item[(iv)]
			$\|\partial_t z_\varepsilon\|_{L^2(\Omega_T)}\leq C$,
		\item[(v)]
			$\|W^\varepsilon(c_\varepsilon,e_\varepsilon,z_\varepsilon)\|_{L^\infty(0,T;L^1(\Omega))}\leq C$
			with $e_\varepsilon:=\epsilon(u_\varepsilon)$,
		\item[(vi)]
			$\|m^\varepsilon(z_\varepsilon)^{1/2}\nabla \mu_\varepsilon\|_{L^2(\Omega_T;\R^n)}\leq C$,
		\item[(vii)]
			$\|\partial_t c_\varepsilon\|_{L^2(0,T;(H^1(\Omega))^\star)}\leq \|m^\varepsilon(z_\varepsilon)\nabla\mu_\varepsilon\|_{L^2(\Omega_T;\R^n)}\leq C$.
	\end{itemize}
	\end{lemma}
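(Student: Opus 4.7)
The strategy is the standard energy-inequality bootstrap: start from \eqref{eqn:ID5}, bound the right-hand side uniformly in $\varepsilon$ with the help of the recovery sequence, absorb the ``cross'' term $\int W_{,e}^{\varepsilon}(c_\varepsilon,e_\varepsilon,z_\varepsilon):\epsilon(\partial_t b_\varepsilon)$ into the coercive part of the energy via Young's inequality, and close the loop with Gronwall. The recovery property $(c_\varepsilon^0,b_\varepsilon^0,z_\varepsilon^0)\to(c^0,b^0,z^0)$ combined with $\frak E_\varepsilon\xrightarrow{\Gamma}\frak E$ delivers $\C E_\varepsilon(c_\varepsilon^0,\epsilon(u_\varepsilon^0),z_\varepsilon^0)\leq C$ uniformly in $\varepsilon$, and $b_\varepsilon\to b$ in $W^{1,1}(0,T;W^{1,\infty}(\Omega;\R^n))$ gives $\|\epsilon(\partial_t b_\varepsilon)\|_{L^1(0,T;L^\infty)}\leq C$.

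With $W^{\varepsilon}(c,e,z)=(g(z)+\varepsilon)(\varphi^1 e:e+\varphi^2(c):e+\varphi^3(c))$ and $\varphi^1$ positive definite, I would apply Young's inequality to the cross term $(g(z_\varepsilon)+\varepsilon)\varphi^2(c_\varepsilon):e_\varepsilon$ to obtain
\begin{align*}
W^{\varepsilon}(c_\varepsilon,e_\varepsilon,z_\varepsilon)\geq \tfrac{\alpha}{2}(g(z_\varepsilon)+\varepsilon)|e_\varepsilon|^2-C(g(z_\varepsilon)+\varepsilon)(1+|c_\varepsilon|^2),
\end{align*}
and similarly $|W_{,e}^\varepsilon:\epsilon(\partial_t b_\varepsilon)|\leq \lambda(g(z_\varepsilon)+\varepsilon)|e_\varepsilon|^2+C_\lambda(g(z_\varepsilon)+\varepsilon)(1+|c_\varepsilon|^2)\|\epsilon(\partial_t b_\varepsilon)\|_{L^\infty}^2$, choosing $\lambda$ small enough to absorb into the coercive part. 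Because $g$ is bounded on $[0,1]$ and $\varepsilon\in(0,1]$, the remainders are controlled by $\Psi(c_\varepsilon)+\tfrac12|\nabla c_\varepsilon|^2$ via \eqref{eqn:assumptionPhi3}. Applying Gronwall in $t$ then yields simultaneously uniform bounds on $\tfrac12\|\nabla c_\varepsilon\|_{L^2}^2+\int\Psi(c_\varepsilon)$, on $\tfrac1p\|\nabla z_\varepsilon\|_{L^p}^p+\int f(z_\varepsilon)$, on $\int W^\varepsilon(c_\varepsilon,e_\varepsilon,z_\varepsilon)$, and on $\int_0^t\int_\Omega(m^\varepsilon(z_\varepsilon)|\nabla\mu_\varepsilon|^2+|\partial_t z_\varepsilon|^2)$, giving (iii)-(vi) at once. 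Estimate (i) follows after complementing the gradient bound on $c_\varepsilon$ by a bound on its mean: testing \eqref{eqn:ID2} with the constant function reveals $\int_\Omega c_\varepsilon(t)\dx=\int_\Omega c^0_\varepsilon\dx$, so Poincar\'e's inequality on the zero-mean part and the uniform bound on $\int_\Omega c_\varepsilon^0\dx$ give control of $\|c_\varepsilon(t)\|_{L^2}$ and hence of $\|c_\varepsilon(t)\|_{H^1}$.

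For (vii), once (vi) is in hand, $\|m^\varepsilon(z_\varepsilon)\nabla\mu_\varepsilon\|_{L^2}=\|(m^\varepsilon(z_\varepsilon))^{1/2}\cdot(m^\varepsilon(z_\varepsilon))^{1/2}\nabla\mu_\varepsilon\|_{L^2}$ is bounded because $m^\varepsilon$ is bounded above independently of $\varepsilon$ (since $m\in\C C([0,1];\R^+)$ and $\varepsilon$ is small); then \eqref{eqn:ID2} identifies $\partial_t c_\varepsilon$ as $\mathrm{div}(m^\varepsilon(z_\varepsilon)\nabla\mu_\varepsilon)$ in the dual sense, giving the $L^2(0,T;(H^1)^\star)$-bound directly by testing against $\zeta\in H^1(\Omega)$.

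The main obstacle will be (ii): the energy inequality only yields control of $(g(z_\varepsilon)+\varepsilon)^{1/2}e_\varepsilon$ in $L^2$, which degenerates precisely where $z_\varepsilon$ is close to zero. To upgrade this to a uniform bound on $\widehat e_\varepsilon=e_\varepsilon\mathds 1_{\{z_\varepsilon>0\}}$, I would exploit \eqref{eqn:assumptiong}, i.e.\ $g'\geq\eta$, together with $g(0)=0$, to get $g(z_\varepsilon)\geq\eta z_\varepsilon$, and then use the quasi-static equilibrium \eqref{eqn:ID1} on the set $\{z_\varepsilon>0\}$ together with a Korn-type inequality for weighted elasticity (with weight $g(z_\varepsilon)+\varepsilon$) applied to the perturbation $u_\varepsilon-b_\varepsilon$, noting that the Dirichlet datum is controlled in $W^{1,\infty}$ and that on $\{z_\varepsilon>0\}$ the relevant path-connected component touches $D$ (by the admissibility structure of the maximal admissible subset $\frak A_D$). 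This is where the specific geometric features of the problem come in, and this step is where I expect the genuine work; the remaining estimates are then routine consequences of (i), (iii)-(vii) and the polynomial growth conditions \eqref{eqn:assumptionPhi2}-\eqref{eqn:assumptionPsi}.
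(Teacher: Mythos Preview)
Your treatment of (i) and (iii)--(vii) is essentially the paper's: Gronwall on \eqref{eqn:ID5} using the recovery-sequence bound on the initial energy and the $W^{1,1}(0,T;W^{1,\infty})$ control of $b_\varepsilon$, then mass conservation to pin down the mean of $c_\varepsilon$, and finally \eqref{eqn:ID2} for the dual bound on $\partial_t c_\varepsilon$. No objection there.

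The gap is (ii), and your proposed route will not close it. From the energy you only control $(g(z_\varepsilon)+\varepsilon)^{1/2}e_\varepsilon$ in $L^2$, and translating $g'\geq\eta$ into $g(z_\varepsilon)\geq\eta z_\varepsilon$ just replaces one degenerating weight by another; on the set where $z_\varepsilon$ is positive but small you still have no uniform-in-$\varepsilon$ lower bound. A weighted Korn inequality with weight $g(z_\varepsilon)+\varepsilon$ would likewise return only the weighted quantity you already have, and any constant in such an inequality would depend on the geometry of $\{z_\varepsilon>0\}$ and on $\varepsilon$. Invoking the admissibility structure $\frak A_D$ at this stage is also premature: the $\varepsilon$-problem lives on all of $\Omega$ with Dirichlet data on $D$, and nothing forces $\{z_\varepsilon>0\}$ to be admissible.

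The paper obtains (ii) by an entirely different mechanism: test the damage variational inequality \eqref{eqn:ID4} with $\zeta\equiv-1$ and integrate in time. After handling the Lagrange-multiplier term $r_\varepsilon$ (which vanishes on $\{z_\varepsilon>0\}$ and is controlled on $\{z_\varepsilon=0\}$ via \eqref{eqn:rstructure}), this yields a bound on $\int_{\{z_\varepsilon>0\}} W_{,z}^\varepsilon(c_\varepsilon,e_\varepsilon,z_\varepsilon)\,\mathrm dx\,\mathrm dt$ in terms of $\partial_t z_\varepsilon$ and $f'(z_\varepsilon)$, both already controlled. The point is that $W_{,z}^\varepsilon=g'(z_\varepsilon)\varphi(c_\varepsilon,e_\varepsilon)$ and $g'\geq\eta>0$ is a \emph{constant} lower bound that does not degenerate, so one obtains directly $\int_{\{z_\varepsilon>0\}}\varphi(c_\varepsilon,e_\varepsilon)\leq C$ and hence, by coercivity of $\varphi^1$ and the growth conditions on $\varphi^2,\varphi^3$, the desired $L^2$ bound on $\widehat e_\varepsilon$. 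The crucial idea you are missing is to extract strain information from the damage inequality through $g'$ rather than from the force balance through $g$.
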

	\begin{proof} 
		Applying Gronwall's lemma to the energy estimate \eqref{eqn:ID5} and noticing the boundedness of
		$\C E_\varepsilon(c_\varepsilon^0, \epsilon(u_\varepsilon^0),z_\varepsilon^0)$ with respect to
		$\varepsilon\in(0,1)$ show (iv), (vi) and
		\begin{align}
			\label{eqn:EnergyBoundedness}
			& \C E_\varepsilon(c_\varepsilon(t), e_\varepsilon(t),z_\varepsilon(t))\leq C
		\end{align}
		for a.e. $t\in(0,T)$ and all $\varepsilon\in(0,1)$ 
		and in particular (v).
		Item (vii) follows from (vi) and the diffusion equation \eqref{eqn:ID2}.
		Taking the constraint $\int_\Omega c_\varepsilon(t)\dx\equiv\;const$ and
		the restriction $0\leq z_\varepsilon\leq 1$
		into account, property \eqref{eqn:EnergyBoundedness} gives rise to
		\begin{align*}
			\|c_\varepsilon\|_{L^\infty(0,T;H^{1}(\Omega))}&\leq C,\\
			\|z_\varepsilon\|_{L^\infty(0,T;W^{1,p}(\Omega))}&\leq C.
		\end{align*}
		Together with the control of the time-derivatives (iv) and (vii), we even obtain boundedness of
		\begin{align*}
			\|c_\varepsilon(t)\|_{H^{1}(\Omega)}&\leq C,\\
			\|z_\varepsilon(t)\|_{W^{1,p}(\Omega)}&\leq C
		\end{align*}
		for every $t\in[0,T]$ and $\varepsilon\in(0,1)$. Hence, (i) and (iii) are proven.
		
		It remains to show (ii).
		By using the estimates
		\begin{align*}
			\hspace*{8em}&0\leq \chi_\varepsilon\leq 1&&\text{a.e. in }\Omega_T,\hspace*{8em}\\
			&0\leq W_{,z}^\varepsilon(c_\varepsilon,e_\varepsilon,z_\varepsilon)&&\text{a.e. in }\Omega_T,
		\end{align*}
		and the definition of $r_\varepsilon$ in \eqref{eqn:rstructure},
		we obtain (here $(\cdot)^+:=max\{\cdot, 0\}$ and $(\cdot)^-:=min\{\cdot, 0\}$)
		\begin{align}
			&\int_{\{z_\varepsilon>0\}}\big(W_{,z}^\varepsilon(c_\varepsilon,e_\varepsilon,c_\varepsilon)+f'(z_\varepsilon)\big)\dxt\notag\\
			&\qquad=\int_{\Omega_T}\big(W_{,z}^\varepsilon(c_\varepsilon,e_\varepsilon,c_\varepsilon)+f'(z_\varepsilon)\big)\dxt\notag\\
			&\qquad\quad-\int_{\{z_\varepsilon=0\}}\big(W_{,z}^\varepsilon(c_\varepsilon,e_\varepsilon,c_\varepsilon)+f'(z_\varepsilon)\big)^+\dxt
			-\int_{\{z_\varepsilon=0\}}\big(W_{,z}^\varepsilon(c_\varepsilon,e_\varepsilon,c_\varepsilon)+f'(z_\varepsilon)\big)^-\dxt\notag\\
			&\qquad\leq\int_{\Omega_T}\big(W_{,z}^\varepsilon(c_\varepsilon,e_\varepsilon,c_\varepsilon)+f'(z_\varepsilon)\big)\dxt\notag\\
			&\qquad\quad-\int_{\{z_\varepsilon=0\}}\chi_\varepsilon\big(W_{,z}^\varepsilon(c_\varepsilon,e_\varepsilon,c_\varepsilon)+f'(z_\varepsilon)\big)^+\dxt
			-\int_{\{z_\varepsilon=0\}}\big(f'(z_\varepsilon)\big)^-\dxt\notag\\
		\label{eqn:est3}
			&\qquad=\int_{\Omega_T}\big(W_{,z}^\varepsilon(c_\varepsilon,e_\varepsilon,c_\varepsilon)+f'(z_\varepsilon)+r_\varepsilon\big)\dxt
				-\int_{\{z_\varepsilon=0\}}\big(f'(z_\varepsilon)\big)^-\dxt.
		\end{align}
		To proceed, we test inequality \eqref{eqn:ID4} with $\zeta\equiv-1$ and integrate from $t=0$ to $t=T$:
	  \begin{align}
	    \label{eqn:VIestimate1}
	    \int_{\Omega_T} \big(W_{,z}^\varepsilon(c_\varepsilon, e_\varepsilon,z_\varepsilon)+f'(z_\varepsilon)
	    +r_\varepsilon \big) \dxt
	    \leq-\int_{\Omega_T}\partial_t z_\varepsilon\dxt.
	  \end{align}
	  Combining \eqref{eqn:est3} and \eqref{eqn:VIestimate1}, we end up with
		\begin{align*}
			&\int_{\{z_\varepsilon>0\}}\big(W_{,z}^\varepsilon(c_\varepsilon,e_\varepsilon,c_\varepsilon)+f'(z_\varepsilon)\big)\dxt
				\leq-\int_{\Omega_T}\partial_t z_\varepsilon\dxt-\int_{\{z_\varepsilon=0\}}\big(f'(z_\varepsilon)\big)^-\dxt.
		\end{align*}
		The right hand side is bounded because of the boundedness properties (iii) and (iv).
		But we also have ($\widehat C>0$ and $C>0$ are constants)
		\begin{align*}
			\int_{\{z_\varepsilon>0\}}\big(W_{,z}^\varepsilon(c_\varepsilon,e_\varepsilon,c_\varepsilon)+f'(z_\varepsilon)\big)\dxt
				&\geq\int_{\{z_\varepsilon>0\}}\Big(\widetilde C|e_\varepsilon|^2-C|c_\varepsilon|^2+f'(z_\varepsilon)\Big)\dxt\\
				&\geq\widetilde C\int_{\Omega_T}|\widehat e_\varepsilon|^2\dxt+\int_{\{z_\varepsilon>0\}}\Big(-C|c_\varepsilon|^2+f'(z_\varepsilon)\Big)\dxt.
		\end{align*}
		Since the left hand side is bounded and
		$\int_{\{z_\varepsilon>0\}}\Big(-C|c_\varepsilon|^2+f'(z_\varepsilon)\Big)\dxt$ is also bounded, we obtain (ii).\ep
	\end{proof}
        
	We proceed by deriving convergence properties for $c_\varepsilon$, 
        $\widehat e_\varepsilon$, $z_\varepsilon$ as $\varepsilon \to 0$ .
	\begin{lemma}
	\label{lemma:convergingSubsequences}
		There exists functions
		\begin{itemize}
			\item[]
				$c\in L^\infty(0,T;H^1(\Omega))\cap H^1(0,T;(H^1(\Omega))^\star)$ with $c(0)=c^0$,
			\item[]
				$\widehat e\in L^2(\Omega_T;\mathbb R^{n\times n})$,
			\item[]
				$z\in L^\infty(0,T;W^{1,p}(\Omega))\cap H^1(0,T;L^2(\Omega))$ with $z(0)=z^0,\quad z\leq 0,\quad\partial_t z\leq 0$
		\end{itemize}
		and a subsequence (we omit the index) such that for $\varepsilon\rightarrow 0^+$
		\begin{enumerate}
			\item[(a)] 
				$c_\varepsilon\rightharpoonup c\text{ in }H^1(0,T;(H^1(\Omega))^\star)$,\\
				$c_\varepsilon\rightarrow c\text{ in }L^\kappa(0,T;L^r(\Omega))\text{ for all }\kappa\geq 1,\;1\leq r<2^\star$,\\
				$c_\varepsilon(t)\rightharpoonup c(t)\text{ in }H^1(\Omega)$ for all $t$,\\
				$c_\varepsilon\rightarrow c\text{ a.e. in }\Omega_T$,
			\item[(b)] 
				$z_\varepsilon\rightharpoonup z\text{ in }H^1(0,T;L^2(\Omega))$,\\
				$z_\varepsilon\rightarrow z\text{ in }L^\kappa(0,T;W^{1,p}(\Omega))$ for all $\kappa\geq 1$,\\
				$z_\varepsilon(t)\weaklim z(t)\text{ in }W^{1,p}(\Omega)$ for all $t$,\\
				$z_\varepsilon\rightarrow z \text{ in }\C C(\ol{\Omega_T})$,
			\item[(c)] 
				$b_\varepsilon\rightarrow b\text{ in } W^{1,1}(0,T;W^{1,\infty}(\Omega;\R^n)),$
			\item[(d)] $\widehat e_{\varepsilon}\rightharpoonup \widehat e\text{ in }L^2(\Omega_T;\mathbb R^{n\times n})$,\\
				$W_{,e}^\varepsilon(c_\varepsilon,e_\varepsilon,z_\varepsilon)\weaklim W_{,e}(c,\widehat e,z)$
				in $L^2(\{z>0\};\mathbb R^{n\times n})$,\\
				$W_{,e}^\varepsilon(c_\varepsilon,e_\varepsilon,z_\varepsilon)\rightarrow 0$
				in $L^2(\{z=0\};\mathbb R^{n\times n})$,\\
				$W_{,c}^\varepsilon(c_\varepsilon,e_\varepsilon,z_\varepsilon)\weaklim W_{,c}(c,\widehat e,z)$
				in $L^2(\{z>0\};\mathbb R^{n\times n})$,\\
				$W_{,c}^\varepsilon(c_\varepsilon,e_\varepsilon,z_\varepsilon)\rightarrow 0$
				in $L^2(\{z=0\};\mathbb R^{n\times n})$.
		\end{enumerate}
	\end{lemma}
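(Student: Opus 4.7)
The plan is to build the convergent subsequences in the standard compactness-by-Banach-Alaoglu-plus-Aubin-Lions fashion from the a-priori bounds in Lemma \ref{lemma:apriori}, and then do the delicate identification of the weak limits of $W_{,e}^\varepsilon$ and $W_{,c}^\varepsilon$ by splitting the domain along $\{z>0\}$ versus $\{z=0\}$.

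For (a), the bounds $c_\varepsilon$ in $L^\infty(0,T;H^1(\Omega))\cap H^1(0,T;(H^1(\Omega))^\star)$ give a weak-$\star$/weak subsequential limit $c$. Aubin--Lions applied to the chain $H^1(\Omega)\compact L^r(\Omega)\hookrightarrow (H^1(\Omega))^\star$ with $r<2^\star$ yields strong convergence in $L^\kappa(0,T;L^r(\Omega))$ for any $\kappa\geq 1$, hence a.e.\ convergence along a further subsequence. The pointwise-in-$t$ weak convergence $c_\varepsilon(t)\weaklim c(t)$ in $H^1(\Omega)$ follows because $c_\varepsilon\in C([0,T];(H^1(\Omega))^\star)$ is equi-continuous and uniformly bounded in $H^1(\Omega)$, so a standard subsequence-of-every-subsequence argument applies at every $t$; this also gives $c(0)=c^0$ via $c_\varepsilon^0\weaklim c^0$. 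For (b), the same scheme works for $z_\varepsilon$ using $W^{1,p}(\Omega)\compact C(\overline\Omega)$ (since $p>n$) as the compact embedding; this is exactly what upgrades strong $L^\kappa(0,T;W^{1,p}(\Omega))$ convergence to uniform convergence $z_\varepsilon\to z$ in $C(\overline{\Omega_T})$. The monotonicity and non-negativity of $z$ pass to the limit pointwise a.e., and $z(0)=z^0$ is again inherited from the recovery sequence. Item (c) is by construction of $b_\varepsilon:=b-b^0+b_\varepsilon^0$ together with $b_\varepsilon^0\to b^0$ in $W^{1,\infty}$.

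The substantive part is (d). From (ii) the sequence $\widehat e_\varepsilon$ is bounded in $L^2$, so it has a weak limit $\widehat e$ up to subsequence. The main obstacle, which I would treat as the heart of the proof, is identifying the limits of $W_{,e}^\varepsilon=(g(z_\varepsilon)+\varepsilon)(2\varphi^1 e_\varepsilon+\varphi^2(c_\varepsilon))$ and $W_{,c}^\varepsilon=(g(z_\varepsilon)+\varepsilon)(\varphi^{2}_{,c}(c_\varepsilon){:}e_\varepsilon+\varphi^{3}_{,c}(c_\varepsilon))$ separately on $\{z>0\}$ and $\{z=0\}$. On $\{z>0\}$, which is open by continuity of $z$, the uniform convergence $z_\varepsilon\to z$ from (b) guarantees that on every compact subset eventually $z_\varepsilon\geq z/2>0$, so $\widehat e_\varepsilon=e_\varepsilon$ there, and consequently $e_\varepsilon\weaklim \widehat e$ in $L^2$ on compact subsets. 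The factor $g(z_\varepsilon)+\varepsilon$ converges uniformly to $g(z)$ on such compacta, while $2\varphi^1 e_\varepsilon+\varphi^2(c_\varepsilon)$ converges weakly in $L^2$ to $2\varphi^1 \widehat e+\varphi^2(c)$ (using (a) together with the growth assumption \eqref{eqn:assumptionPhi2}); multiplying a uniformly convergent bounded sequence with a weakly $L^2$-convergent one preserves weak $L^2$ convergence, giving the desired identification on $\{z>0\}$. The same argument, now invoking \eqref{eqn:assumptionPhi2} for $\varphi^{2}_{,c}$ and \eqref{eqn:assumptionPhi3} for $\varphi^{3}_{,c}$, handles $W_{,c}^\varepsilon$ on $\{z>0\}$.

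On the degenerate set $\{z=0\}$ the key observation is that the energy bound (v) yields $\bigl\|(g(z_\varepsilon)+\varepsilon)^{1/2}e_\varepsilon\bigr\|_{L^2}\leq C$, because $\varphi^1$ is positive definite and the lower-order terms $\varphi^2(c_\varepsilon){:}e_\varepsilon+\varphi^3(c_\varepsilon)$ are absorbed via Young's inequality using (i) and \eqref{eqn:assumptionPhi2}, \eqref{eqn:assumptionPhi3}. Writing
\begin{align*}
W_{,e}^\varepsilon(c_\varepsilon,e_\varepsilon,z_\varepsilon)=(g(z_\varepsilon)+\varepsilon)^{1/2}\cdot(g(z_\varepsilon)+\varepsilon)^{1/2}\bigl(2\varphi^1 e_\varepsilon+\varphi^2(c_\varepsilon)\bigr),
\end{align*}
the first factor tends uniformly to $0$ on $\{z=0\}$ by uniform convergence of $z_\varepsilon$ and continuity of $g$ with $g(0)=0$, while the second factor is bounded in $L^2$; hence the product converges strongly to $0$ in $L^2(\{z=0\};\R^{n\times n})$. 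The identical factorization and argument apply to $W_{,c}^\varepsilon$ on $\{z=0\}$, completing (d). A standard diagonal extraction across all four statements yields one common subsequence.
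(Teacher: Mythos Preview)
Your treatment of (a), (c), and (d) is essentially sound and in fact more explicit than the paper's on (d); the factorisation $(g(z_\varepsilon)+\varepsilon)^{1/2}\cdot(g(z_\varepsilon)+\varepsilon)^{1/2}(\ldots)$ on $\{z=0\}$ is a clean way to get the strong convergence to zero there. The paper argues (d) slightly differently, by first extracting an abstract weak limit $w_e$ of $W_{,e}^\varepsilon$ in $L^2(\Omega_T)$ and then identifying $w_e\mathds 1_{\{z>0\}}=W_{,e}(c,\widehat e,z)\mathds 1_{\{z>0\}}$ via the sets $N_\varepsilon:=\{z_\varepsilon>0\}\cap\{z>0\}$ and the pointwise convergence $\mathds 1_{N_\varepsilon}\to\mathds 1_{\{z>0\}}$; but your compact-subset argument is equivalent in substance.

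There is, however, a genuine gap in (b). Aubin--Lions with the triple $W^{1,p}(\Omega)\compact C(\ol\Omega)\hookrightarrow L^2(\Omega)$ gives you compactness in $C(\ol{\Omega_T})$, and hence the uniform convergence $z_\varepsilon\to z$, but it does \emph{not} give strong convergence in $L^\kappa(0,T;W^{1,p}(\Omega))$: Aubin--Lions yields compactness in the intermediate space, never in the top space. The strong convergence of $\nabla z_\varepsilon$ in $L^p(\Omega_T;\R^n)$ is the real work in this lemma, and it requires a separate argument that uses the structure of the damage inequality \eqref{eqn:ID4}. The paper obtains it from the uniform $p$-monotonicity estimate
\[
C_\mathrm{uc}\int_{\Omega_T}|\nabla z_\varepsilon-\nabla z|^p\dxt
\leq\int_{\Omega_T}|\nabla z_\varepsilon|^{p-2}\nabla z_\varepsilon\cdot\nabla(z_\varepsilon-\zeta_\varepsilon)\dxt + o(1),
\]
where $\{\zeta_\varepsilon\}\subseteq L^p(0,T;W^{1,p}(\Omega))$ is an auxiliary approximation satisfying $0\leq\zeta_\varepsilon\leq z_\varepsilon$ and $\zeta_\varepsilon\to z$ strongly in $L^p(0,T;W^{1,p}(\Omega))$ (constructed as in \cite{WIAS1520}). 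One then tests \eqref{eqn:ID4} with $\zeta_\varepsilon-z_\varepsilon\leq 0$ and shows that the resulting right-hand side tends to $0$; this last step is nontrivial because the term $g'(z_\varepsilon)\varphi^1 e_\varepsilon{:}e_\varepsilon(\zeta_\varepsilon-z_\varepsilon)$ involves the \emph{unbounded} strain $e_\varepsilon$, and one must exploit that this term has a sign together with the energy bound $\|g(z_\varepsilon)|e_\varepsilon|^2\|_{L^\infty(0,T;L^1)}\leq C$ to control the mixed $\varphi^2$-contribution. Without this argument, part (b) as stated is unproved.
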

	\begin{proof} 
	The a-priori estimates from Lemma \ref{lemma:apriori} and classical compactness theorems as well as
	compactness theorems from Lions and Aubin \cite{Simon} yield the
  convergence properties of (a) and
  \begin{subequations}
	\begin{align}
	\label{eqn:conv1}
		\hspace*{7em}&z_\varepsilon\stackrel{\star}{\rightharpoonup} z&&\text{ in }L^\infty(0,T;W^{1,p}(\Omega)),\hspace*{7em}\\
		&z_\varepsilon\rightharpoonup z&&\text{ in }H^1(0,T;L^2(\Omega)),\\
	\label{eqn:conv3}
		&z_\varepsilon\rightarrow z&&\text{ in }\C C(\ol{\Omega_T}),\\
		&\widehat e_{\varepsilon}\rightharpoonup \widehat e&&\text{ in }L^2(\Omega_T;\mathbb R^{n\times n}),\\
		&W_{,e}^\varepsilon(c_\varepsilon, e_\varepsilon,z_\varepsilon)\weaklim w_e&&\text{ in }
		L^2(\Omega_T;\mathbb R^{n\times n})
	\end{align}
  \end{subequations}
	as $\varepsilon\rightarrow 0^+$ for a subsequence (we omit the subscript) and appropriate functions $w_e$, $\widehat e$ and $z$.
	
	The proof of the strong convergence of $\nabla z_\varepsilon$ in
	$L^p(\Omega_T;\R^n)$ essentially relies on the uniform $p$-mono\-tonicity property
	\begin{equation}
	\label{eqn:pMonotonicity}
		C_\mathrm{uc}|x-y|^p \le \big(|x|^{p-2} x - |y|^{p-2}y \big)\cdot(x-y)
	\end{equation}
	and on an approximation scheme $\{\zeta_\varepsilon\}\subseteq L^p(0,T;W^{1,p}(\Omega))$ with $\zeta_\varepsilon\geq 0$ and
	\begin{subequations}
	  \begin{align}
	    \label{eqn:approxConvergence}
	    &\zeta_\varepsilon\rightarrow z\text{ in }L^p(0,T;W^{1,p}(\Omega))\text{ as }\varepsilon\rightarrow 0^+,\\
	    \label{eqn:approxRestriction}
	    &0\leq \zeta_\varepsilon\leq z_\varepsilon\text{ a.e. in }\Omega_T\text{ for all }\varepsilon\in(0,1).
	  \end{align}
	\end{subequations}
	For details of the construction of $\zeta_\varepsilon$, we refer to \cite{WIAS1520}.
	By using \eqref{eqn:pMonotonicity}, we obtain the estimate:
	\begin{align*}
	  C_\mathrm{uc}\int_{\Omega_T}|\nabla z_\varepsilon-\nabla z|^p\dxt
	  &\leq\int_{\Omega_T}(|\nabla z_\varepsilon|^{p-2}\nabla z_\varepsilon
	  -|\nabla z|^{p-2}\nabla z)\cdot \nabla(z_\varepsilon-z)\dxt\\
	  &=\underbrace{\int_{\Omega_T}|\nabla z_\varepsilon|^{p-2}\nabla z_\varepsilon
	    \cdot \nabla(z_\varepsilon-\zeta_\varepsilon)\dxt}_{A_\varepsilon}\\
	  &\quad+\underbrace{\int_{\Omega_T}|\nabla z_\varepsilon|^{p-2}\nabla z_\varepsilon
	    \cdot \nabla(\zeta_\varepsilon-z)-|\nabla z|^{p-2}\nabla z
	    \cdot \nabla(z_\varepsilon-z)\dxt}_{B_\varepsilon}.
	\end{align*}
	The variational inequality \eqref{eqn:ID4} tested with $\zeta(t)=\zeta_\varepsilon(t)-z_\varepsilon(t)$
	and integrated from $t=0$ to $t=T$ yield
	\begin{align}
		A_\varepsilon\leq{}&
			\int_{\Omega_T}\Big(W_{,z}^\varepsilon(c_\varepsilon,\epsilon(u_\varepsilon),z_\varepsilon) + f'(z_\varepsilon)
			+\partial_t z_\varepsilon\Big)(\zeta_\varepsilon-z_\varepsilon)\dxt\notag\\
		\leq{}&
			\int_{\Omega_T}  g'(z_\varepsilon) \varphi^1\epsilon(u_\varepsilon) :   \epsilon(u_\varepsilon)
			(\zeta_\varepsilon-z_\varepsilon)\dxt
			+\int_{\Omega_T}\varphi^2(c_\varepsilon):\epsilon(u_\varepsilon)(\zeta_\varepsilon-z_\varepsilon)\dxt\notag\\
	\label{eqn:est1}
		&+\int_{\Omega_T}(\varphi^3(c_\varepsilon) + f'(z_\varepsilon)+\partial_t z_\varepsilon)(\zeta_\varepsilon-z_\varepsilon)
			\dxt.
	\end{align}
	Here, we have used the fact that $r_\varepsilon=0$ a.e. in $\{z_\varepsilon>0\}$
	and $\zeta_\varepsilon-z_\varepsilon=0$ a.e. in $\{z_\varepsilon=0\}$ by \eqref{eqn:approxRestriction}.
	We treat each term on the right hand side of \eqref{eqn:est1} as follows:
	\begin{itemize}
		\item[$\bullet$]
			Due to \eqref{eqn:assumptiong}\text{ and }\eqref{eqn:approxRestriction}, we obtain
			$$
				g'(z_\varepsilon) \varphi^1\epsilon(u_\varepsilon):\epsilon(u_\varepsilon)(\zeta_\varepsilon-z_\varepsilon)\leq 0
					\text{ a.e. in $\Omega_T$}.
			$$
			Therefore, the first integral
			on the right hand side of \eqref{eqn:est1} is less or equal $0$.
		\item[$\bullet$]
			Due to the estimates \eqref{eqn:assumptionPhi2} and \eqref{eqn:approxRestriction},
			and the estimates
			$$
				z_\varepsilon\leq C g(z_\varepsilon),
			$$
			(follows from \eqref{eqn:assumptiong} and $g(0)\geq 0$) and
			$$
				g(z_\varepsilon)|e_\varepsilon|^2\leq C\big(W^\varepsilon(c_\varepsilon,e_\varepsilon,z_\varepsilon)+|c_\varepsilon|^2\big)
			$$
			(follows from \eqref{eqn:elasticEnergy}, \eqref{eqn:assumptionPhi2} and \eqref{eqn:assumptionPhi2}),
			the second integral on the right hand side of \eqref{eqn:est1} can be estimated as follows:
			\begin{align}
				&\left|\int_{\Omega_T}\varphi^2(c_\varepsilon):\epsilon(u_\varepsilon)(\zeta_\varepsilon-z_\varepsilon)\dxt\right|\notag\\
				&\quad\leq
					\big\|\varphi^2(c_\varepsilon)\big\|_{L^\infty(0,T;L^2(\Omega;\R^{n\times n}))}
					\big\|\epsilon(u_\varepsilon)\sqrt{|\zeta_\varepsilon-z_\varepsilon|}\big\|_{L^\infty(0,T;L^2(\Omega;\R^{n\times n}))}
					\big\|\sqrt{|\zeta_\varepsilon-z_\varepsilon|}\big\|_{L^1(0,T;L^\infty(\Omega))}\notag\\
				&\quad\leq C
					\big\|c_\varepsilon\big\|_{L^\infty(0,T;L^2(\Omega;\R^{n\times n}))}
					\big\|\epsilon(u_\varepsilon)\sqrt{|z_\varepsilon|+|z_\varepsilon|}\big\|_{L^\infty(0,T;L^2(\Omega;\R^{n\times n}))}
					\big\|\sqrt{|\zeta_\varepsilon-z_\varepsilon|}\big\|_{L^1(0,T;L^\infty(\Omega))}\notag\\
				&\quad\leq C
					\big\|c_\varepsilon\big\|_{L^\infty(0,T;L^2(\Omega;\R^{n\times n}))}
					\big\||\epsilon(u_\varepsilon)|^2g(z_\varepsilon)\big\|_{L^\infty(0,T;L^1(\Omega;\R^{n\times n}))}^{1/2}
					\big\|\sqrt{|\zeta_\varepsilon-z_\varepsilon|}\big\|_{L^1(0,T;L^\infty(\Omega))}\notag\\
				&\quad\leq C
					\big\|c_\varepsilon\big\|_{L^\infty(0,T;L^2(\Omega;\R^{n\times n}))}
					\Big(\big\|W^\varepsilon(c_\varepsilon,\epsilon(u_\varepsilon),z_\varepsilon)\big\|_{L^\infty(0,T;L^1(\Omega;\R^{n\times n}))}^{1/2}
					+\big\|c_\varepsilon\big\|_{L^\infty(0,T;L^2(\Omega;\R^{n\times n}))}\Big)\times\notag\\
			\label{eqn:est2}
				&\qquad\quad\times\big\|\sqrt{|\zeta_\varepsilon-z_\varepsilon|}\big\|_{L^1(0,T;L^\infty(\Omega))}.
			\end{align}
			By using the a priori estimates in Lemma \ref{lemma:apriori} (i) and (v), we obtain
			$$
				\big\|c_\varepsilon\big\|_{L^\infty(0,T;L^2(\Omega;\R^{n\times n}))}
					\Big(\big\|W^\varepsilon(c_\varepsilon,\epsilon(u_\varepsilon),z_\varepsilon)\big\|_{L^\infty(0,T;L^1(\Omega;\R^{n\times n}))}^{1/2}
					+\big\|c_\varepsilon\big\|_{L^\infty(0,T;L^2(\Omega;\R^{n\times n}))}\Big)\leq C
			$$
			and by using the convergence properties \eqref{eqn:conv3} and \eqref{eqn:approxConvergence}
			and the compact embedding\linebreak $W^{1,p}(\Omega)\hookrightarrow\C C(\ol{\Omega_T})$, we end up with
			$$
				\big\|\sqrt{|\zeta_\varepsilon-z_\varepsilon|}\big\|_{L^1(0,T;L^\infty(\Omega))}\to 0
			$$
			as $\varepsilon\to 0^+$. Therefore, the right hand side of \eqref{eqn:est2} converges to $0$ as $\varepsilon\to 0^+$.
		\item[$\bullet$]
			Exploiting the boundedness properties in Lemma \ref{lemma:apriori} (i), (iii) and (iv)
			and the convergence properties \eqref{eqn:conv3} and \eqref{eqn:approxConvergence},
			the third integral on the right hand side of \eqref{eqn:est1} converges to $0$ as $\varepsilon\to 0^+$.
	\end{itemize}
	We have shown $\limsup_{\varepsilon\to 0^+}A_\varepsilon=0$.
	Futhermore, the boundedness of $\{\nabla z_\varepsilon\}$ in $L^p(\Omega_T;\R^n)$ (see Lemma \ref{lemma:apriori}) and the
	convergence property \eqref{eqn:approxConvergence} show $B_\varepsilon\rightarrow 0$ as $\varepsilon\rightarrow 0^+$.

	We conclude with $\lim_{\varepsilon\to 0^+}\int_{\Omega_T}|\nabla z_\varepsilon-\nabla z|^p\dxt=0$.
	Therefore, (b) is also shown.
	
	Property (c) is an immediate consequence of the definition of
	$b_\varepsilon$ and the convergence property $b^0_\varepsilon \to b^0$
	in $W^{1, \infty}(\Omega; \R^n)$.

	To prove (d), we define $N_\varepsilon:=\{z_\varepsilon>0\}\cap\{z>0\}$.
	Consequently, we get
	\begin{align}
	  \label{eqn:We}
	  &W_{,e}^\varepsilon(c_\varepsilon, \widehat e_\varepsilon,z_\varepsilon)\mathds{1}_{N_\varepsilon}
	  =W_{,e}^\varepsilon(c_\varepsilon, e_\varepsilon,z_\varepsilon)\mathds{1}_{N_\varepsilon}
	\end{align}
	and the convergence
	\begin{align}
	  \label{eqn:domainConvergence}
	  \mathds{1}_{N_\varepsilon}\rightarrow \mathds{1}_{\{z>0\}}\text{ pointwise in }\Omega_T\text{ and in }L^q(\Omega_T)\text{ for all }q\geq 1
	\end{align}
	as $\varepsilon\rightarrow 0^+$ by using $z_\varepsilon\rightarrow z$ in $\ol{\Omega_T}$.
	Calculating the weak $L^1(\Omega_T;\mathbb R^{n\times n})$-limits in
	\eqref{eqn:We} for $\varepsilon\rightarrow 0^+$ on both sides
	by using the already proven convergence properties,
	we obtain $W_{,e}(c,\widehat e,z)\mathds 1_{\{z>0\}}=w_e\mathds 1_{\{z>0\}}$.
	The remaining convergence properties in (d) follow from Lemma \ref{lemma:apriori} (v).
	\ep
	\end{proof}
      
	\begin{corollary}
	\label{cor:inclusion}
		Let $t\in[0,T]$ and $U\compact \{ z(t)=0 \}$ be an open subset.
		Then $U\subseteq \{z_\varepsilon(s)>0\}$ for all $s\in[0,t]$ provided that $\varepsilon>0$ is sufficiently small.
		More precisely, there exist $0<\eta$ such that
		\begin{align*}
			z_\varepsilon(s)\geq \eta\text{ in }U
		\end{align*}
		for all $s\in[0,t]$ and for all $0<\varepsilon\ll 1$.
	\end{corollary}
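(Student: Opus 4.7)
The plan is to combine the pointwise monotonicity $\partial_t z\leq 0$ from Lemma~\ref{lemma:convergingSubsequences}(b) with the strong convergence $z_\varepsilon\to z$ in $\C C(\ol{\Omega_T})$ from the same lemma. I read the compact inclusion in the only way that is consistent with the conclusion, namely $U\compact\{z(t)>0\}$.

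Because $p>n$ we have $W^{1,p}(\Omega)\hookrightarrow \C C(\ol\Omega)$, so $z(\cdot,t)$ is (a.e.\ equal to) a continuous function on $\ol\Omega$ and $\{z(t)>0\}$ is relatively open. The compactness of $\ol U$ together with the compact containment $\ol U\subset\{z(t)>0\}$ therefore produces a strictly positive lower bound
\[
  3\eta_0 \;:=\; \min_{x\in\ol U} z(x,t) \;>\; 0.
\]
Since $z_\varepsilon\to z$ in $\C C(\ol{\Omega_T})$, the limit $z$ has a continuous representative on $\ol{\Omega_T}$, and the sign condition $\partial_t z\leq 0$ transfers to the pointwise monotonicity $z(\cdot,s)\geq z(\cdot,t)$ on $\ol\Omega$ for every $s\leq t$. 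Consequently $z(x,s)\geq 3\eta_0$ for every $(x,s)\in\ol U\times[0,t]$.

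To conclude, pick $\varepsilon_0>0$ small enough that $\|z_\varepsilon-z\|_{\C C(\ol{\Omega_T})}\leq\eta_0$ for every $0<\varepsilon\leq\varepsilon_0$; then on $\ol U\times[0,t]$
\[
  z_\varepsilon(x,s) \;\geq\; z(x,s)-\eta_0 \;\geq\; 2\eta_0,
\]
and the corollary follows with $\eta:=2\eta_0$. There is no serious obstacle here: the argument is essentially a one-shot uniform-convergence-plus-monotonicity comparison. The only point that deserves care is the passage to continuous representatives so that the pointwise inequalities above make literal sense, which is free of charge thanks to the $\C C(\ol{\Omega_T})$-convergence.
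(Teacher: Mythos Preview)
Your proof is correct and follows essentially the same route as the paper's: both arguments read the hypothesis as $U\compact\{z(t)>0\}$ (the paper's own proof opens with $\overline U\times[0,t]\subseteq\{z>0\}$, which only makes sense under that reading), obtain a positive lower bound for $z$ on $\overline U\times[0,t]$ via continuity, compactness and the monotonicity $\partial_t z\le 0$, and then finish by the uniform convergence $z_\varepsilon\to z$ in $\C C(\ol{\Omega_T})$. Your version is simply more explicit about the role of monotonicity, which the paper leaves implicit.
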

	\begin{proof}
		We have $\overline U\times[0,T]\subseteq\{z>0\}$ and $\inf_{(x,t)\in\overline{U}\times[0,t]}z(x,s)>0$ by $z\in\C C(\overline{\Omega_T})$.
		The claim follows from $z_\varepsilon\to z$ in $\C C(\overline{\Omega_T})$ according to Lemma \ref{lemma:convergingSubsequences} (b).
		\ep
	\end{proof}
	
	To obtain a-priori estimates for the chemical potentials $\{\mu_\varepsilon\}$ in a local sense, we make use of the so-called \textit{conical}
	Poincar\'e inequality for star-shaped domains cited below.
	\begin{theorem}[Conical Poincar\'e inequality \cite{BK98}]
	\label{theorem:conicalPI}
		Suppose that $\Omega\subseteq\R^n$ is a bounded and star-shaped domain, $r\geq 0$ and $1\leq p<\infty$. Then there exists a constant
		$C=C(\Omega,p,r)>0$ such that
		\begin{align*}
			\int_\Omega |w(x)-w_{\Omega,\delta^t}|^p\delta^r(x)\dx\leq C\int_\Omega|\nabla w(x)|^p\delta^r(x)\dx
		\end{align*}
		for all $w\in C^1(\Omega),$ where the $\delta^r$-weight $w_{\Omega,\delta^r}$ is given by
		\begin{align*}
			w_{\Omega,\delta^r}:=\int_\Omega w(x)\delta^r(x)\dx,\quad
			\delta(x):=\mathrm{dist}(x,\partial\Omega).
		\end{align*}
	\end{theorem}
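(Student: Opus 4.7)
The plan is to follow the classical representation-formula approach for Poincar\'e-type inequalities on star-shaped domains, with extra care taken to track the weight $\delta^r$. First I would reduce to the case that $\Omega$ is star-shaped with respect to an entire ball $B_\rho(x_0)\subset\Omega$ (not just a single point). Since $\Omega$ is a bounded star-shaped domain, the set of points with respect to which $\Omega$ is star-shaped has nonempty interior, so such a ball exists; the resulting constant absorbs its radius.

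For any $x\in\Omega$ and $y\in B_\rho(x_0)$, the segment from $y$ to $x$ lies in $\Omega$, so the fundamental theorem of calculus gives the pointwise identity
$$w(x)-w(y)=\int_0^1 \nabla w\bigl(tx+(1-t)y\bigr)\cdot(x-y)\,dt.$$
Averaging this against the probability measure $\delta^r(y)\,dy/\|\delta^r\|_{L^1(\Omega)}$ produces a representation of $w(x)-w_{\Omega,\delta^r}$. The change of variables $z=tx+(1-t)y$ (with Jacobian $(1-t)^{-n}$) in the inner integral converts the double integral into a singular-kernel expression of the form
$$|w(x)-w_{\Omega,\delta^r}|\le C\int_\Omega\frac{|\nabla w(z)|}{|x-z|^{n-1}}\,\kappa(x,z)\,dz,$$
where $\kappa(x,z)$ packages the factor $\delta^r\!\bigl(\frac{z-tx}{1-t}\bigr)$, the Jacobian, and the $t$-integration.

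The crux—and the main technical obstacle—is controlling $\kappa$ so that the resulting operator is bounded on $L^p(\Omega,\delta^r\,dx)$. Here one exploits a geometric lemma particular to star-shaped domains: for $y\in B_\rho(x_0)$ and $z=tx+(1-t)y$, the distances $\delta(y)$, $\delta(z)$, and to a lesser extent $\delta(x)$, are comparable up to multiplicative constants depending on $t$ and on the geometry of $\Omega$. This comparability follows because rays from the star-center ball cannot come close to $\partial\Omega$ before reaching their endpoint. Feeding this comparison into the kernel, the right-hand side becomes a weighted Riesz-type potential $I_1$ applied to $|\nabla w|\,\delta^{r/p}$, with weight $\delta^{r/p}$ on the left.

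Finally, an $L^p$ bound for this weighted Riesz potential closes the argument. For $r\ge 0$ one may invoke the Muckenhoupt $A_p$ theory (the weight $\delta^r$ is an $A_p$ weight on $\Omega$ with constants controlled by $p$ and $r$), or, more elementarily, apply a Schur test to the symmetric kernel using the explicit pointwise bounds on $\kappa$. Raising to the $p$-th power, integrating in $x$ against $\delta^r$, and using Fubini yields the desired estimate with $C=C(\Omega,p,r)$. The passage from $C^1(\Omega)$ to general admissible $w$ is then a routine density argument.
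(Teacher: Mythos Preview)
The paper does not prove this theorem at all; it is quoted verbatim from the external reference \cite{BK98} and used as a black box (the only additional remark is the one-line density extension to $W^{1,p}$). So there is no in-paper argument against which to compare your sketch.

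That said, your outline has real gaps that would have to be repaired before it could stand on its own. First, the reduction step is unjustified: a bounded open set that is star-shaped with respect to a single point need \emph{not} be star-shaped with respect to any ball; the kernel of star-centers is convex but can have empty interior (think of a disk with radial slits accumulating at the center). The theorem as stated only assumes star-shapedness, so you cannot simply absorb a ball radius into the constant. Second, your averaging step is inconsistent: the segment identity $w(x)-w(y)=\int_0^1\nabla w\cdot(x-y)\,dt$ is available only for $y\in B_\rho(x_0)$, yet $w_{\Omega,\delta^r}$ is the $\delta^r$-weighted average over all of $\Omega$. Averaging over the ball yields $w(x)$ minus a ball mean, not $w_{\Omega,\delta^r}$; you would still need to control the difference of the two means, and that requires an argument you have not supplied. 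Third, the claim that $\delta^r\in A_p(\Omega)$ for every $r\geq 0$ is false: on a Lipschitz domain the Muckenhoupt condition for $\delta^r$ holds only in the range $-1<r<p-1$, so for large $r$ the weighted Riesz-potential machinery is unavailable and a different mechanism (this is where the ``conical'' structure enters in \cite{BK98}) is needed.
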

	By a density argument, the statement is, of course, also true for all $w\in W^{1,p}(\Omega)$.
	\begin{lemma}[A-priori estimates for {${\bm \mu_\varepsilon}$}]\hspace{1em}
	\label{lemma:apriori2}
		\begin{enumerate}
			\renewcommand{\labelenumi}{(\roman{enumi})}
			\item \textit{Interior estimate.}
				For every $t\in[0,T]$ and for every open cube $Q\compact \{z(t)>0\}\cap\Omega$,
				there exists a $C>0$ such that for all $0<\varepsilon\ll1$
				\begin{align}
				\label{eqn:innerEstimate}
					\|\mu_\varepsilon\|_{L^2(0,t;H^1(Q))}&\leq C.
				\end{align}
			\item \textit{Estimate at the boundary.}
				For every $t\in[0,T]$ and every $x_0\in \{z(t)>0\}\cap\partial\Omega$,
				there exist a neighborhood $U$ of $x_0$ 
				and a $C>0$ such that for all $0<\varepsilon\ll1$
				\begin{align}
				\label{eqn:boundaryEstimate}
					\|\mu_\varepsilon\|_{L^2(0,t;H^1(U\cap\Omega))}&\leq C.
				\end{align}
		\end{enumerate}
	\end{lemma}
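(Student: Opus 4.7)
The key observation is that on any compact subset of $\{z(t)>0\}\cap\overline\Omega$ (cut-off in time at level $t$) Corollary \ref{cor:inclusion} provides a uniform lower bound $z_\varepsilon \geq \eta_0>0$ for all times $s\in[0,t]$ and all sufficiently small $\varepsilon$. Since $m$ is continuous and positive away from $0$, this yields $m^\varepsilon(z_\varepsilon)\geq m(\eta_0)>0$ on the relevant set. Combining this with the global energy bound from Lemma \ref{lemma:apriori}(vi) controls $\nabla\mu_\varepsilon$ in $L^2$ on compact subsets; what remains is to control a mean value of $\mu_\varepsilon$, which can be extracted by testing the chemical potential equation \eqref{eqn:ID3} with an appropriate cut-off and invoking (conical) Poincar\'e.

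\textbf{Interior estimate (i).} Given $Q\Subset\{z(t)>0\}\cap\Omega$, I would choose an intermediate open set $\widetilde Q$ with $Q\Subset \widetilde Q\Subset\{z(t)>0\}\cap\Omega$. Corollary \ref{cor:inclusion} provides $\eta_0>0$ such that $z_\varepsilon(s)\geq\eta_0$ on $\widetilde Q$ for every $s\in[0,t]$ and every $\varepsilon\ll 1$, whence
\begin{align*}
\|\nabla\mu_\varepsilon\|_{L^2(0,t;L^2(\widetilde Q))}^2 \leq \frac{1}{m(\eta_0)}\int_0^t\!\!\int_{\widetilde Q} m^\varepsilon(z_\varepsilon)|\nabla\mu_\varepsilon|^2\dxs \leq C
\end{align*}
by Lemma \ref{lemma:apriori}(vi). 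To bound $\mu_\varepsilon$ itself, I pick $\chi\in C_c^\infty(\widetilde Q)$ with $\chi\equiv 1$ on $Q$ and test \eqref{eqn:ID3} with $\zeta=\chi$. The three terms on the right-hand side are estimated as follows: the first by $\|\nabla c_\varepsilon(s)\|_{L^2}\|\nabla\chi\|_{L^2}$, uniformly in time via Lemma \ref{lemma:apriori}(i); the second by the growth condition \eqref{eqn:assumptionPsi} together with the Sobolev embedding of $H^1$ into $L^{2^\star}$; the third using that $\chi$ is supported in $\widetilde Q\subseteq\{z_\varepsilon>0\}$, so $\epsilon(u_\varepsilon)\chi=\widehat e_\varepsilon\chi$ is bounded in $L^2(\Omega_T;\R^{n\times n})$ by Lemma \ref{lemma:apriori}(ii), and then \eqref{eqn:assumptionPhi2}, \eqref{eqn:assumptionPhi3} give integrability of the remaining factors. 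This produces a bound $\|\int_\Omega \mu_\varepsilon(\cdot)\chi\,\mathrm dx\|_{L^2(0,t)}\leq C$. Combining this with the standard Poincar\'e inequality on $\widetilde Q$ (applied to $\mu_\varepsilon-\bar\mu_\varepsilon^{\widetilde Q}$) and a two-line argument comparing the two mean values yields $\|\mu_\varepsilon\|_{L^2(0,t;L^2(\widetilde Q))}\leq C$, hence \eqref{eqn:innerEstimate}.

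\textbf{Boundary estimate (ii).} The plan is analogous, but since $x_0\in\partial\Omega$ we need a neighborhood $U$ of $x_0$ such that $U\cap\Omega$ is star-shaped with respect to a ball. Using the $\C C^2$-regularity of $\partial\Omega$, a sufficiently small ball $B_r(x_0)$ has this property; I choose $U\Subset\widetilde U$ with $\widetilde U\cap\Omega$ still star-shaped and contained in $\{z(t)>0\}$. Corollary \ref{cor:inclusion} again gives $z_\varepsilon\geq\eta_0>0$ on $\widetilde U\cap\Omega$ for small $\varepsilon$, so $\|\nabla\mu_\varepsilon\|_{L^2(0,t;L^2(\widetilde U\cap\Omega))}\leq C$ exactly as before. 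For the $L^2$-control, I test \eqref{eqn:ID3} with a cut-off $\chi$ supported in $\widetilde U\cap\overline\Omega$ and equal to $1$ on $U\cap\overline\Omega$; the three integrand estimates are identical to the interior case. To pass from control of the weighted mean to control of $\mu_\varepsilon$ on $U\cap\Omega$, I invoke Theorem \ref{theorem:conicalPI} with $r=0$ on $\widetilde U\cap\Omega$, which provides the star-shaped Poincar\'e inequality needed to handle the non-convex domain near the boundary.

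\textbf{Main obstacle.} The essential difficulty is that the $W_{,c}^\varepsilon$ term on the right-hand side of \eqref{eqn:ID3} involves $\epsilon(u_\varepsilon)$, for which I only have the spatio-temporal $L^2$ bound on $\widehat e_\varepsilon=\epsilon(u_\varepsilon)\mathds 1_{\{z_\varepsilon>0\}}$ from Lemma \ref{lemma:apriori}(ii)—there is no pointwise-in-time estimate available. Hence the resulting bound on the mean value is inherently only $L^2$ in time, and this is precisely why the statement of the Lemma asserts $L^2(0,t;H^1(\cdot))$ rather than a stronger norm. A secondary technical point is the geometric construction of $\widetilde U$ so that $\widetilde U\cap\Omega$ is simultaneously star-shaped (to apply the conical Poincar\'e inequality) and contained in $\{z(t)>0\}$ (so that Corollary \ref{cor:inclusion} is applicable uniformly in $s\in[0,t]$), but this is handled by $\C C^2$-regularity of $\partial\Omega$ and continuity of $z$.
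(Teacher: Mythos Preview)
Your argument is correct. For part (i) your route coincides with the paper's up to cosmetic choices: the paper tests \eqref{eqn:ID3} with the distance function $\lambda(x)=\mathrm{dist}(x,\partial\widetilde Q)$ and then applies the conical Poincar\'e inequality of Theorem~\ref{theorem:conicalPI} with weight exponent $r=2$, whereas you test with a smooth cut-off $\chi$ and use the ordinary Poincar\'e inequality plus a comparison of the $\chi$-weighted and unweighted means. Both yield the same conclusion.

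For part (ii) your approach is genuinely different and more direct. The paper flattens the boundary via a $\C C^2$-diffeomorphism $\pi$, reflects $\mu_\varepsilon$ across $\partial\Omega$ to obtain a function $\widetilde\mu_\varepsilon$ defined on a full cube $Q\ni x_0$, and then applies the conical Poincar\'e inequality on $Q$; controlling the weighted mean of $\widetilde\mu_\varepsilon$ requires constructing, via change of variables, the somewhat delicate Lipschitz test function $\zeta=(\lambda\circ\C T)|\det\nabla\C T|$ and verifying it vanishes outside $\C T(Q\setminus\Omega)$. You instead exploit that for small $\widetilde U$ the set $\widetilde U\cap\Omega$ is itself a Lipschitz (indeed star-shaped) domain, so the standard Poincar\'e inequality applies directly without any reflection. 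This buys simplicity and avoids the boundary-flattening machinery; the paper's route, on the other hand, reduces everything to the cube case already handled in (i) and keeps the conical inequality as the single analytic tool.

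One minor remark: your claim that the bound on the weighted mean is ``inherently only $L^2$ in time'' because you invoke Lemma~\ref{lemma:apriori}(ii) for $\widehat e_\varepsilon$ is slightly pessimistic. The paper instead uses Lemma~\ref{lemma:apriori}(v), from which $(g(z_\varepsilon)+\varepsilon)^{1/2}e_\varepsilon\in L^\infty(0,T;L^2(\Omega))$ and hence $W_{,c}^\varepsilon(c_\varepsilon,e_\varepsilon,z_\varepsilon)\in L^\infty(0,T;L^1(\Omega))$, so the weighted mean is in fact bounded uniformly in $s\in(0,T)$. This does not affect the stated $L^2(0,t;H^1)$ conclusion, however.
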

	\begin{proof}
		\begin{enumerate}
			\renewcommand{\labelenumi}{(\roman{enumi})}
			\item[(i)]
				Let $t\in[0,T]$ and $Q\compact \{z(t)>0\}\cap\Omega$ be an open cube.
				We consider the Lipschitz domain $\widetilde Q:=B_\eta(Q):=\{x\in\R^n\,|\,\mathrm{dist}(x,Q)<\varepsilon\}$, where $\eta>0$
				is chosen so small such that $\widetilde Q\compact \{z(t)>0\}\cap\Omega$.
				We define the following function
				\begin{align*}
					\zeta(x):=
					\begin{cases}
						\lambda(x):=\mathrm{dist}(x,\partial \widetilde Q)&\text{if }x\in \widetilde Q,\\
						0&\text{else.}
					\end{cases}
				\end{align*}
				$\zeta$ is a Lipschitz function on $\ol\Omega$ with Lipschitz constant $1$.
				By Rademacher's theorem (see, for instance, \cite[Theorem 2.2.1]{Ziemer89}), $\zeta$ is in $W^{1,\infty}(\Omega)$.
				Now, we can test \eqref{eqn:ID3} with $\zeta$.
				Then, by using the previously proven a-priori estimates, we obtain boundedness of
				\begin{align}
				\label{eqn:bd1}
					\int_{\widetilde Q} \mu_\varepsilon(x,s)\lambda(x)\dx\leq C
				\end{align}
				with respect to a.e. $s\in(0,T)$ and $\varepsilon$.
				
				Furthermore, there exists an $\eta>0$ such that $z_\varepsilon(s)\geq\eta$ in $\widetilde Q$
				for all $s\in[0,t]$ and for all $0<\varepsilon\ll 1$ (see Corollary \ref{cor:inclusion}).
				Thus, by assumption \eqref{eqn:assumptionMobility}, $m^\varepsilon(z_\delta(s))\geq\eta'>0$ holds in $\widetilde Q$ for
				all $s\in[0,t]$ and all $0<\varepsilon\ll 1$ for a common constant $\eta'>0$.
				Consequently, we get by the a-priori estimate for $m^\varepsilon(z_\varepsilon)^{1/2}\nabla \mu_\varepsilon$
				\begin{align}
				\label{eqn:bd2}
					\|\nabla\mu_\varepsilon\|_{L^2(\widetilde Q\times [0,t])}\leq C
				\end{align}
				for all $\varepsilon$.
				Applying Theorem \ref{theorem:conicalPI} (we put in $\Omega=\widetilde Q$, $r=p=2$ and $w=\mu_\varepsilon(s)$ for $s\in[0,t]$),
				integrating from $0$ to $t$ in time and using boundedness
				properties \eqref{eqn:bd1} and \eqref{eqn:bd2}, we obtain boundedness of $\|\mu_\varepsilon\lambda\|_{L^2(\widetilde Q\times [0,t])}$ and
				thus boundedness of $\|\mu_\varepsilon\|_{L^2(Q\times [0,t])}$ with respect to $0<\varepsilon\ll 1$.
				Together with \eqref{eqn:bd2}, we get the claim \eqref{eqn:innerEstimate}.
			\item[(ii)]
				Since $\{z(t)=0\}\subseteq\ol\Omega$ is a closed set, we can find a
				neighborhood $U\subseteq\R^n\setminus\{z(t)=0\}$ of $x_0$.
				Furthermore, since $\Omega$ has a $\C C^2$-boundary, there exists a $\C C^2$-diffeomorphism $\pi:(-1,1)^n\rightarrow U$ with the properties
				\begin{itemize}
					\item[\textbullet]
						$\pi\big((-1,1)^{n-1}\times(-1,0)\big)\subseteq\Omega$,
					\item[\textbullet]
						$\pi\big((-1,1)^{n-1}\times\{0\}\big)\subseteq\partial\Omega$,
					\item[\textbullet]
						$\pi\big((-1,1)^{n-1}\times(0,1)\big)\subseteq\R^n\setminus\ol\Omega$.
				\end{itemize}
				Let $\vartheta:(-1,1)^n\rightarrow(-1,1)^n$ denote the reflection $x\mapsto(x_1,\ldots,x_{n-1},-x_n)$ and
				$\C T:=\pi\circ\vartheta\circ\pi^{-1}$.
				Furthermore, let $\widetilde \mu_\varepsilon\in L^2(0,t;H^1(U))$ be defined by
				\begin{align*}
					\widetilde \mu_\varepsilon(x,s):=
					\begin{cases}
						\mu_\varepsilon(x,s)&\text{if }x\in U\cap\Omega,\\
						\mu_\varepsilon(\C T(x),s)&\text{if }x\in U\setminus\ol\Omega.
					\end{cases}
				\end{align*}
				Let $Q\compact U$ be a non-empty open cube with $x_0\in Q$.
				Then, integration by substitution with respect to the transformation $\C T$ yields
				for a.e. $s\in(0,t)$
				\begin{align}
					\int_{Q}\widetilde \mu_\varepsilon(x,s)\lambda(x)\dx
					={}&\int_{Q\cap\Omega}\mu_\varepsilon(x,s)\lambda(x)\dx\notag\\
					&+\int_{\C T(Q\setminus\Omega)}\mu_\varepsilon(x,s)\lambda(\C T(x))|\det(\nabla\C T(x))|\dx,
				\label{eqn:mueSplitting}
				\end{align}
				where the Lipschitz function $\lambda:\R^N\to\R$ is given by
				$$
					\lambda(x):=
					\begin{cases}
						\mathrm{dist}(x,\partial Q)&\text{if }x\in Q,\\
						0&\text{if }x\in \R^n\setminus Q.
					\end{cases}
				$$
				We are going to show that both terms on the right hand side of \eqref{eqn:mueSplitting} are bounded
				with respect to $\varepsilon$ and a.e. $s\in(0,t)$.
				\begin{itemize}
					\item[$\bullet$]
						Testing \eqref{eqn:ID3} with the function $\zeta=\lambda$ yields
						\begin{align*}
							\int_{Q\cap\Omega} \mu_\varepsilon(s)\lambda\dx
								={}&\int_{Q\cap\Omega}\big(\nabla c_\varepsilon(s)\cdot\nabla\lambda+\Psi_{,c}(c_\varepsilon(s))\lambda\big)\dx\\
							&+\int_{Q\cap\Omega}W_{,c}^\varepsilon(c_\varepsilon(s),\epsilon(u_\varepsilon(s)),z_\varepsilon(s))\lambda\dx.
						\end{align*}
						By the already know a-priori estimates, every integral term on the right hand side is bounded
						w.r.t. $\varepsilon$ and a.e. $s\in(0,T)$.
					\item[$\bullet$]
						The function
						\begin{align}
						\label{eqn:zetaFunction}
							\zeta(x):=
							\begin{cases}
								(\lambda(\C T(x)))|\det(\nabla\C T(x))|&\text{if }x\in\C T(Q\setminus\Omega),\\
								0&\text{if }x\in\Omega\setminus \C T(Q\setminus\Omega)
							\end{cases}
						\end{align}
						is a Lipschitz function in $\Omega$ because:
						\begin{itemize}
							\item[--]
								$\lambda\circ\C T$ is a Lipschitz function in $U\cap\Omega$
								and $\lambda\circ\C T=0$ in $\big(U\cap\Omega\big)\setminus\C T(Q\setminus\Omega)$.
								The first property follows from the Lipschitz continuity of $\lambda$ and of $\C T$
								(note that $\C T$ is a $\C C^2$-diffeomorphism).
								The latter property can be seen as follows.
								\textit{Assume the contrary.}
								Then, we find an $x\in \big(U\cap\Omega\big)\setminus\C T(Q\setminus\Omega)$
								such that $\lambda(\C T(x))>0$.
								By the definition of $\lambda$, we get $\C T(x)\in Q$.
								Since $x\in \Omega$, it follows $\C T(x)\not\in\Omega$ by the construction of $\C T$.
								Therefore, $\C T(x)\in Q\setminus\Omega$.
								This gives $x=\C T(\C T(x))\in \C T(Q\setminus\Omega)$ which is a contradiction.
							\item[--]
								$|\det(\nabla\C T)|$ is a Lipschitz function in $U\cap\Omega$
								(note that $\C T$ is a $\C C^2$-diffeomorphism).
						\end{itemize}
						Testing \eqref{eqn:ID3} with $\zeta$ from \eqref{eqn:zetaFunction} yields
						\begin{align*}
							&\int_{\C T(Q\setminus\Omega)} \mu_\varepsilon(s)(\lambda\circ\C T)|\det(\nabla\C T)|\dx\\
							&=\int_{\C T(Q\setminus\Omega)}\nabla c_\varepsilon(s)\cdot\nabla\big((\lambda\circ\C T)|\det(\nabla\C T)|\big)\dx\\
							&+\int_{\C T(Q\setminus\Omega)}\big(\Psi_{,c}(c_\varepsilon(s))
								+W_{,c}(c_\varepsilon(s),\epsilon(u_\varepsilon(s)),z_\varepsilon(s))\big)(\lambda\circ\C T)|\det(\nabla\C T)|\dx.
						\end{align*}
						By the already know a-priori estimates, every integral term on the right hand side is bounded
						w.r.t. $\varepsilon$ and a.e. $s\in(0,t)$.
				\end{itemize}
				
				For $\nabla\widetilde\mu_\varepsilon(s)$, we also get by integration via substitution:
				\begin{align}
					&\int_0^t\int_{Q}|\nabla \widetilde \mu_\varepsilon(x,s)|^2\dxs\notag\\
					&\qquad\leq\int_0^t\int_{Q\cap\Omega}|\nabla \mu_\varepsilon(x,s)|^2\dx\ds
						+\int_0^t\int_{Q\setminus\Omega}|\nabla \mu_\varepsilon(\C T(x),s)|^2|\nabla\C T(x)|^2\dx\ds\notag\\
					&\qquad=\int_0^t\int_{ Q\cap\Omega}|\nabla \mu_\varepsilon(x,s)|^2\dx\ds\notag\\
					&\qquad\quad+\int_0^t\int_{\C T(Q\setminus\Omega)}|\nabla \mu_\varepsilon(x,s)|^2|\nabla \C T(\C T(x))|^2|\det(\nabla\C T(x))|\dx\ds.
					\label{eqn:mue}
				\end{align}
				Since $Q\cap\Omega\compact \{z(t)>0\}$ and $\C T(Q\setminus\Omega)\compact \{z(t)>0\}$, we deduce
				$z_\varepsilon(s)\geq\eta$ on $Q\cap\Omega$ and on $\C T(Q\setminus\Omega)$
				for all $s\in[0,t]$ and for all sufficiently small $0<\varepsilon$ (see Corollary \ref{cor:inclusion}).
				Thus, $\nabla \mu_\varepsilon$ is bounded in $L^2((Q\cap\Omega)\times(0,t);\R^n)$ and in
				$L^2(\C T(Q\setminus\Omega)\times(0,t);\R^n)$ with respect to $0<\varepsilon\ll1$
				by also using the a-priori estimate for $m^\varepsilon(z_\varepsilon)^{1/2}\nabla\mu_\varepsilon$,
				the property $m\in \C C([0,1];\R_+)$ and assumption \eqref{eqn:assumptionMobility}.
				
				Therefore, the left hand side of \eqref{eqn:mue} is also bounded for all $0<\varepsilon\ll1$.
				The Conical Poincar\'e inequality in Theorem \ref{theorem:conicalPI} yields boundedness of
				$\widetilde\mu_\varepsilon\lambda$ in $L^2(Q\times(0,t))$.
				Finally, we can find a neighborhood $V\subseteq Q$ of $x_0$ such that $\widetilde\mu_\varepsilon$ is bounded in $L^2(0,t;H^1(V))$.
				\ep
		\end{enumerate}
	\end{proof}
	Due to the a-priori estimates for $\{\mu_\varepsilon\}$ and $\{u_\varepsilon\}$, the limit functions $\mu$ and $u$
	can only be expected to be in some space-time local Sobolev space
        $L_t^2H_{x,\mathrm{loc}}^1$ (see Section \ref{section:weakSolution}).
	In the sequel, it will be necessary to represent the maximal admissible subset of the not completely damaged area,
	i.e. $\frak A_D(\{z>0\})$, as a union of Lipschitz domains which are connected to parts of the Dirichlet boundary $D$.
	To this end, we define by $F$  the shrinking set $F:=\{z>0\}$ until the end of
        this work and obtain the following result.
	\begin{lemma}[cf. {\cite[Lemma 4.18]{WIAS1722}}]
	\label{lemma:deformationField}
		There exists a function $u\in L_t^2 H_{x,\mathrm{loc}}^1(\frak A(F);\R^n)$ such that
		$\epsilon(u)=\widehat e$ a.e. in $\frak A_\Gamma(F)$ and
		$u=b$ on the boundary $D_T\cap \frak A_\Gamma(F)$.
	\end{lemma}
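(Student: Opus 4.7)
The plan is to construct $u$ as the weak limit, along a suitable diagonal subsequence, of the approximating displacements $u_\varepsilon$ on a countable exhaustion of $\frak A_D(F)$ by Lipschitz subdomains that each reach the Dirichlet boundary. The guiding principle is that Korn's inequality together with the uniform non-degeneracy of $g(z_\varepsilon)$ on such subdomains will turn the existing $L^2$-bound on $\widehat e_\varepsilon$ into a uniform $H^1$-bound on $u_\varepsilon$, after which the passage to the limit is essentially linear.

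First I would fix $t\in(0,T]$ and pick a Lipschitz domain $\widetilde U\compact \Omega$ such that $\widetilde U\subseteq \frak A_D(F(t))$, $\C H^{n-1}(\partial \widetilde U \cap D)>0$, and $\widetilde U$ is path-connected to $D$ inside $\frak A_D(F(t))$. Using the path-connectedness and Corollary~\ref{cor:inclusion} along a finite chain of compactly contained open sets covering the connecting path, there exists $\eta_0>0$ such that $z_\varepsilon(s)\geq \eta_0$ on $\widetilde U$ for every $s\in[0,t]$ and every sufficiently small $\varepsilon$. By \eqref{eqn:assumptiong} this gives $g(z_\varepsilon(s))\geq g(\eta_0)>0$ on $\widetilde U$, hence on $\widetilde U$ the coerciveness of $\varphi^1$ together with the a-priori estimate \textit{(ii)} of Lemma~\ref{lemma:apriori} implies that $\epsilon(u_\varepsilon)=\widehat e_\varepsilon$ is bounded in $L^2(\widetilde U\times(0,t);\R^{n\times n}_\mathrm{sym})$ uniformly in $\varepsilon$.

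Since $u_\varepsilon = b_\varepsilon$ on $D_T$ and $\C H^{n-1}(\partial \widetilde U\cap D)>0$, Korn's second inequality on $\widetilde U$ applied to $u_\varepsilon-b_\varepsilon$ yields
\begin{equation*}
\|u_\varepsilon-b_\varepsilon\|_{L^2(0,t;H^1(\widetilde U;\R^n))}
\leq C\,\|\epsilon(u_\varepsilon-b_\varepsilon)\|_{L^2(\widetilde U\times(0,t);\R^{n\times n}_\mathrm{sym})},
\end{equation*}
which, combined with Lemma~\ref{lemma:convergingSubsequences}(c), yields a uniform bound for $\|u_\varepsilon\|_{L^2(0,t;H^1(\widetilde U;\R^n))}$. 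Extracting a weakly convergent subsequence provides a limit $u^{\widetilde U,t}\in L^2(0,t;H^1(\widetilde U;\R^n))$ whose symmetric gradient equals the weak $L^2$-limit of $\epsilon(u_\varepsilon)$; on $\widetilde U\times(0,t)$ we have $\epsilon(u_\varepsilon)=\widehat e_\varepsilon \weaklim \widehat e$ by Lemma~\ref{lemma:convergingSubsequences}(d), so $\epsilon(u^{\widetilde U,t})=\widehat e$ there. Trace continuity and $b_\varepsilon\rightarrow b$ in $W^{1,\infty}$ give $u^{\widetilde U,t}=b$ on $D_t\cap \partial\widetilde U$.

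Next I would select a countable family $\{(\widetilde U_k,t_k)\}_{k\in\N}$ of such admissible pairs that exhausts $\frak A_D(F)$: for each rational $t_k\in(0,T)$, cover $\frak A_D(F(t_k))$ by countably many Lipschitz cubes compactly contained in $\frak A_D(F(t_k))$ and chained to $D$, using that $F$ is shrinking. A standard diagonal argument produces a single subsequence $\varepsilon\to 0^+$ and a function $u\in L_t^2H_{x,\mathrm{loc}}^1(\frak A_D(F);\R^n)$ for which $u_\varepsilon\weaklim u$ on every $\widetilde U_k\times(0,t_k)$. The limits on overlapping pairs agree a.e.\ because the subsequence is common, so $u$ is well-defined; the identification $\epsilon(u)=\widehat e$ a.e.\ in $\frak A_D(F)$ and the Dirichlet identity $u=b$ on $D_T\cap \frak A_D(F)$ follow locally from the step above. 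The main obstacle I expect is the combinatorial/topological one of arranging the exhaustion so that every point of $\frak A_D(F)$ (recall this set can be highly irregular, with infinitely many complete-damage components appearing in arbitrarily short time intervals) lies inside some $\widetilde U_k$ that is genuinely joined to $D$ inside $\frak A_D(F(t_k))$; this is where the shrinking property of $F$ and the definition of $\frak A_D$ as a union of path-connected components meeting $D$ are used in an essential way, and where one must invoke a construction analogous to the one in \cite[Lemma~4.18]{WIAS1722}.
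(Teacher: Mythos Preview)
The paper gives no proof of its own for this lemma; it simply cites \cite[Lemma~4.18]{WIAS1722}. Your outline---exhaust $\frak A_D(F)$ by Lipschitz subdomains reaching $D$, use Corollary~\ref{cor:inclusion} to get $z_\varepsilon\geq\eta_0$ there so that $\widehat e_\varepsilon=\epsilon(u_\varepsilon)$, apply Korn's second inequality with partial Dirichlet data to obtain uniform $H^1$-bounds on $u_\varepsilon-b_\varepsilon$, then run a diagonal argument---is the correct strategy and is precisely the one carried out in the cited reference.

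Two small points. First, your condition ``$\widetilde U\compact \Omega$'' together with ``$\C H^{n-1}(\partial \widetilde U\cap D)>0$'' is self-contradictory, since the former forces $\ol{\widetilde U}\subset\Omega$ and hence $\partial\widetilde U\cap\partial\Omega=\emptyset$; you mean that $\widetilde U$ is relatively compactly contained in $\frak A_D(F(t))\subseteq\ol\Omega$, so that $\ol{\widetilde U}$ may meet $D\subset\partial\Omega$. Second, once $z_\varepsilon>0$ on $\widetilde U\times(0,t)$ you have $\epsilon(u_\varepsilon)=\widehat e_\varepsilon$ there, and the $L^2$-bound follows immediately from Lemma~\ref{lemma:apriori}(ii); the detour through $g(z_\varepsilon)\geq g(\eta_0)$ and the coerciveness of $\varphi^1$ is not needed at this step. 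Your final paragraph correctly identifies that the delicate part is the exhaustion of the (possibly highly irregular) admissible set by Lipschitz domains touching $D$, and this is exactly what the construction in \cite[Lemma~4.18]{WIAS1722} provides.
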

	A related result can be shown for the sequence $\{\mu_\varepsilon\}$ by exploiting the estimates in Lemma \ref{lemma:apriori2}.
	To proceed, we recall a definition introduced in \cite{WIAS1722}.
	\begin{definition}[cf. {\cite[Definition 4.1]{WIAS1722}}]
	\label{def:fineCover}
		Let $H\subseteq\ol\Omega$ be a relatively open subset. We call a countable family $\{U_k\}$
		of open sets $U_k\compact H$ a fine representation for $H$ if
		for every $x\in H$ there exist an open set $U\subseteq\R^n$ with $x\in U$ and an $k\in\N$ such that
		$U\cap\Omega\subseteq U_k$.
	\end{definition}
	\begin{lemma}
	\label{lemma:chemicalPotential}
		Let a sequence $\{t_m\}\subseteq[0,T]$ containing $T$ be dense.
		There exist a fine representation $\{U_k^m\}_{k\in\N}$ for $F(t_m)$ for every $m\in\N$,
		a function $\mu\in L_t^2 H_{x,\mathrm{loc}}^1(F)$ and a
		subsequence of $\{\mu_\varepsilon\}$ (also denoted by $ \{\mu_\varepsilon\}$)
		such that for all $k,m\in\N$
		\begin{align}
			\label{eqn:muConvergence2}
			\mu_\varepsilon\weaklim\mu\text{ in } L^2(0,t_m;H^1(U_k^m))
		\end{align}
		as $\varepsilon\rightarrow 0^+$.
	\end{lemma}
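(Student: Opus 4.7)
The plan has four main steps: (a) build fine representations of $F(t_m)$ that are tailored to the a-priori estimates of Lemma \ref{lemma:apriori2}; (b) run a Cantor diagonal procedure over the countable family of pairs $(k,m)$ to extract a single subsequence that weakly converges in every $L^2(0,t_m;H^1(U_k^m))$; (c) glue these limits into one function $\mu$ on $F$ using uniqueness of weak limits; (d) check that this $\mu$ lies in $L^2_tH^1_{x,\mathrm{loc}}(F)$.

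For step (a), fix $m\in\N$. Each interior point $x\in F(t_m)\cap\Omega$ admits an open cube $Q_x\compact\{z(t_m)>0\}\cap\Omega$ centered at $x$; by Lemma \ref{lemma:apriori2}(i), $\sup_\varepsilon\|\mu_\varepsilon\|_{L^2(0,t_m;H^1(Q_x))}<\infty$. Each boundary point $x_0\in F(t_m)\cap\partial\Omega$ comes equipped, via Lemma \ref{lemma:apriori2}(ii), with an $\R^n$-neighborhood $V_{x_0}$ on whose intersection with $\Omega$ the same uniform bound holds; after shrinking $V_{x_0}$ we may assume $\overline{V_{x_0}}\cap\overline\Omega\subseteq F(t_m)$, so that $V_{x_0}\cap\Omega\compact F(t_m)$ in the sense of Definition \ref{def:fineCover}. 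Because $F(t_m)\subseteq\ol\Omega$ is second countable, Lindel\"of's theorem extracts a countable subfamily $\{U_k^m\}_{k\in\N}$, which by construction is a fine representation of $F(t_m)$ on which $\{\mu_\varepsilon\}$ is uniformly bounded in $L^2(0,t_m;H^1(U_k^m))$.

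Step (b) is standard: each $L^2(0,t_m;H^1(U_k^m))$ is reflexive, so Banach--Alaoglu applied successively to the enumeration of the countable set of pairs $(k,m)$, combined with a Cantor diagonal extraction, yields a single subsequence (still called $\{\mu_\varepsilon\}$) and limits $\mu^{k,m}\in L^2(0,t_m;H^1(U_k^m))$ with $\mu_\varepsilon\weaklim\mu^{k,m}$ in $L^2(0,t_m;H^1(U_k^m))$ for every $k,m$. For step (c), whenever $U_{k_1}^{m_1}\cap U_{k_2}^{m_2}\neq\emptyset$, restriction to $(U_{k_1}^{m_1}\cap U_{k_2}^{m_2})\times(0,\min\{t_{m_1},t_{m_2}\})$ is a bounded linear operator from each ambient space, hence preserves weak convergence; uniqueness of weak limits forces $\mu^{k_1,m_1}=\mu^{k_2,m_2}$ almost everywhere on the overlap, and the pointwise-a.e.\ gluing defines a single function $\mu$ on all of $F$, proving \eqref{eqn:muConvergence2}.

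Step (d) is where the argument is most delicate. Let $t\in(0,T]$ and $U\compact F(t)$ be open. By Lemma \ref{lemma:convergingSubsequences}(b) the limit $z$ belongs to $\C C(\ol{\Omega_T})$, so $z\geq\eta>0$ uniformly on the compact set $\overline U\times\{t\}$ and, by uniform continuity, also on $\overline U\times[t,t+\delta]$ for some $\delta>0$. Density of $\{t_m\}$ supplies an index $m$ with $t_m\in[t,t+\delta]$; then $\overline U\subseteq F(t_m)$, and the Lebesgue number lemma allows us to cover $\overline U$ by \emph{finitely many} sets $U_{k_1}^m,\ldots,U_{k_N}^m$. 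Restricting the weak convergences from step (b) to the smaller time interval $(0,t)\subseteq(0,t_m)$ gives $\mu\in L^2(0,t;H^1(U_{k_i}^m))$ for each $i$, whence $\mu|_{U\times(0,t)}\in L^2(0,t;H^1(U))$ by the finite-cover patching $\|\mu\|_{H^1(U)}\leq\sum_i\|\mu\|_{H^1(U_{k_i}^m)}$. This verifies membership of $\mu$ in $L^2_tH^1_{x,\mathrm{loc}}(F)$. The chief obstacle is precisely step (d): one must arrange a time $t_m\geq t$ (so that the available estimate $[0,t_m]\supseteq[0,t]$ goes the right way) at which the previously constructed fine representation already encompasses the arbitrary test cylinder $\overline U$, and this is possible only because $F$ is shrinking and $z$ is continuous in time.
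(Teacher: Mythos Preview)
Your proof is correct and follows essentially the same approach as the paper: construct the fine representations from the interior cubes and boundary patches supplied by Lemma~\ref{lemma:apriori2}, then run a diagonal extraction over the countable family $(k,m)$. The paper's own proof is terser---it defers the gluing of the local limits and the verification that $\mu\in L_t^2H_{x,\mathrm{loc}}^1(F)$ to \cite[Lemma~4.18]{WIAS1722}---whereas you have spelled these out explicitly (minor cosmetic points: a finite subcover suffices in step~(d), no Lebesgue number is needed, and the patching inequality should read $\|\mu\|_{H^1(U)}^2\leq\sum_i\|\mu\|_{H^1(U_{k_i}^m)}^2$).
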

	\begin{proof}
		A fine representation $\{U_k^m\}_{k\in\N}$ of $F(t_m)$ can be constructed by countably many open cubes $Q\compact F(t_m)\cap\Omega$
		together with finitely many open sets of the form $U\cap\Omega$ such that $U$ satisfies \eqref{eqn:boundaryEstimate} from
		Lemma \ref{lemma:apriori2} (ii).
		Then, we have for each $k,m\in\N$ the estimate
		$$
			\|\mu_\varepsilon\|_{L^2(0,t;H^1(U_k^m))}\leq C
		$$
		for all $0<\varepsilon\ll1$. By successively choosing sub-sequences and by a diagonal argument,
		we obtain a $\mu\in L_t^2 H_{x,\mathrm{loc}}^1(F)$ such that \eqref{eqn:muConvergence2} is satisfied
		(cf. proof of \cite[Lemma 4.18]{WIAS1722}).
		\ep
	\end{proof}

	The a-priori estimates and the convergence properties of $\{z_\varepsilon\}$
	in Lemma \ref{lemma:convergingSubsequences} and of $\{\mu_\varepsilon\}$ in Lemma \ref{lemma:chemicalPotential}, respectively, yield
	the following corollary.
	\begin{corollary}
	\label{cor:mueConvergence}
		It holds for $\varepsilon\rightarrow 0^+$:
		\begin{align*}
			&m(z_\varepsilon)\nabla\mu_\varepsilon\weaklim m(z)\nabla\mu\text{ in } L^2(F;\R^n),\\
			&m(z_\varepsilon)\nabla\mu_\varepsilon\rightarrow 0\text{ in } L^2(\Omega_T\setminus F;\R^n).
		\end{align*}
	\end{corollary}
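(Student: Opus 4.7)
The plan is to obtain a single global weak $L^2$-limit of $m(z_\varepsilon)\nabla\mu_\varepsilon$ on $\Omega_T$, and then to identify this limit separately on $F$ and on $\Omega_T\setminus F=\{z=0\}$. The key observation is that even though $\nabla\mu_\varepsilon$ is only controlled locally inside $F$ (Lemma~\ref{lemma:chemicalPotential}), the product $m(z_\varepsilon)\nabla\mu_\varepsilon$ inherits a global bound from Lemma~\ref{lemma:apriori}: writing
\begin{align*}
\int_{\Omega_T}|m(z_\varepsilon)\nabla\mu_\varepsilon|^2\dxt
   \leq \|m\|_{L^\infty([0,1])}\int_{\Omega_T}m^\varepsilon(z_\varepsilon)|\nabla\mu_\varepsilon|^2\dxt\leq C,
\end{align*}
one extracts (up to a further subsequence) a weak limit $\chi\in L^2(\Omega_T;\R^n)$.

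To identify $\chi=m(z)\nabla\mu$ on $F$, I would work on the fine representation $\{U_k^m\}$ provided by Lemma~\ref{lemma:chemicalPotential}. On each cylinder $U_k^m\times(0,t_m)\compact F$, Corollary~\ref{cor:inclusion} gives $z_\varepsilon\geq\eta>0$ uniformly, and Lemma~\ref{lemma:convergingSubsequences}(b) yields $z_\varepsilon\to z$ in $\C C(\overline{\Omega_T})$; by continuity of $m$ on $[0,1]$ this upgrades to $m(z_\varepsilon)\to m(z)$ uniformly on $U_k^m\times(0,t_m)$. Combining this uniform convergence with the weak convergence $\nabla\mu_\varepsilon\weaklim\nabla\mu$ in $L^2(0,t_m;L^2(U_k^m;\R^n))$ gives the weak $L^2$-convergence $m(z_\varepsilon)\nabla\mu_\varepsilon\weaklim m(z)\nabla\mu$ on every such cylinder. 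Since the family $\{U_k^m\times(0,t_m)\}_{k,m\in\N}$ covers $F$ up to a $\C L^{n+1}$-null set (using that $F$ is relatively open and shrinking, and that $\{t_m\}$ is dense), the global weak limit $\chi$ agrees with $m(z)\nabla\mu$ a.e.\ on $F$, which is the first assertion.

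For the strong convergence on $\Omega_T\setminus F=\{z=0\}$ I would argue as follows. Since $z_\varepsilon\to z$ uniformly on $\overline{\Omega_T}$ and $0\leq z_\varepsilon\leq 1$, the values of $z_\varepsilon$ restricted to $\{z=0\}$ converge uniformly to $0$; the assumption $m\in\C C([0,1];\R^+)$ together with the degeneracy condition \eqref{eqn:assumptionMobility} then yields $\|m(z_\varepsilon)\|_{L^\infty(\{z=0\})}\to 0$. Combining this with Lemma~\ref{lemma:apriori}(vi), one estimates
\begin{align*}
\int_{\Omega_T\setminus F}|m(z_\varepsilon)\nabla\mu_\varepsilon|^2\dxt
   \leq \|m(z_\varepsilon)\|_{L^\infty(\{z=0\})}\int_{\Omega_T}m^\varepsilon(z_\varepsilon)|\nabla\mu_\varepsilon|^2\dxt
   \leq C\,\|m(z_\varepsilon)\|_{L^\infty(\{z=0\})}\longrightarrow 0,
\end{align*}
which gives the desired strong $L^2$-convergence to $0$ and, incidentally, confirms $\chi=0$ a.e.\ on $\Omega_T\setminus F$.

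The only delicate point is the covering argument: one has to justify that the weak limit $\chi$, initially obtained only on a countable union of local cylinders where $\nabla\mu_\varepsilon$ is controlled, does agree with $m(z)\nabla\mu$ a.e.\ on $F$. This relies on the shrinking property of $F$ (so that $U_k^m\compact F(t_m)$ automatically lies in $F(s)$ for all $s\leq t_m$) and on the density of $\{t_m\}$ in $[0,T]$, both already encoded in Lemma~\ref{lemma:chemicalPotential}; the rest of the proof consists of a-priori bounds and uniform convergence.
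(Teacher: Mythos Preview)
Your proposal is correct and is precisely the argument the paper has in mind; the paper itself gives no details beyond stating that the corollary follows from the a-priori estimates and the convergence properties of $\{z_\varepsilon\}$ (Lemma~\ref{lemma:convergingSubsequences}) and $\{\mu_\varepsilon\}$ (Lemma~\ref{lemma:chemicalPotential}). Two minor remarks: the appeal to Corollary~\ref{cor:inclusion} is superfluous, since the uniform convergence $m(z_\varepsilon)\to m(z)$ on each cylinder already follows from $z_\varepsilon\to z$ in $\C C(\overline{\Omega_T})$ and $m\in\C C([0,1])$; and the passage to a ``further subsequence'' for the global weak limit $\chi$ can be dropped a posteriori, because your identification shows that $\chi$ is uniquely determined on all of $\Omega_T$, so the full subsequence fixed in Lemma~\ref{lemma:chemicalPotential} already converges.
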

	Now, we have all necessary convergence properties to perform the degenerate limit in \eqref{eqn:ID1}-\eqref{eqn:ID5}.
	To proceed, we need the following auxiliary result.
	\begin{lemma}
	\label{lemma:partitionOfUnity}
		Let $\{t_m\}$ and $\{U_k^m\}$ be as in Lemma \ref{lemma:chemicalPotential}.
		Then, for every compact subset $K\subseteq F$ there exist a finite set
		$I\subseteq\N$, values $m_k\in\N$, $k\in I$, and functions $\psi_k\in \C C^\infty(\ol{\Omega_T})$, $k\in I$, such that
		\begin{enumerate}
			\item[(i)]
				$K\cap\Omega_T\subseteq\bigcup_{k\in I}U_k^{m_k}\times(0,t_{m_k})$,
			\item[(ii)]
				$\mathrm{supp}(\psi_k)\subseteq \ol{U_k^{m_k}}\times[0,t_{m_k}]$,
			\item[(iii)]
				$\sum_{k\in I}\psi_k\equiv 1$ on $K$.
		\end{enumerate}
	\end{lemma}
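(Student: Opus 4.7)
The plan is to cover $K$ by finitely many product-type open sets $W_j \subseteq \ol{\Omega_T}$ of the form $(V \cap \ol\Omega) \times J$ compatible with pairs $(U_{k(j)}^{m_j}, t_{m_j})$, and then take a smooth partition of unity subordinate to $\{W_1, \dots, W_N\} \cup \{\ol{\Omega_T} \setminus K\}$. The existence of smooth partitions of unity on the compact manifold-with-corners $\ol{\Omega_T}$ subordinate to any open cover is standard; the real work is in the construction of the $W_j$'s.

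For the local construction, I fix $(x_0, t_0) \in K$. Since $F$ is relatively open in $\ol{\Omega_T}$ and $K \subseteq F$, there exists $\delta_0 > 0$ with $\{(y,s) \in \ol{\Omega_T}\colon |y-x_0| < \delta_0,\ |s-t_0| < \delta_0\} \subseteq F$. Using density of $\{t_m\}$ in $[0,T]$ together with the assumption $T \in \{t_m\}$, I pick $m_0$ such that $t_0 < t_{m_0} < t_0 + \delta_0$ if $t_0 < T$, and $t_{m_0} = T$ if $t_0 = T$; in both cases $x_0 \in F(t_{m_0})$. Applying the fine-representation property (Definition \ref{def:fineCover}) of $\{U_k^{m_0}\}_k$ at $x_0$ produces an open set $V \subseteq \R^n$ with $x_0 \in V$ and an index $k_0$ such that $V \cap \Omega \subseteq U_{k_0}^{m_0}$. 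After shrinking $V$ into $B_{\delta_0}(x_0)$ and choosing $\delta \in (0, \delta_0)$ with $t_0 + \delta < t_{m_0}$ (or $\delta < T$ when $t_0 = T$), the set
\[
W_{(x_0,t_0)} := (V \cap \ol\Omega) \times \bigl((t_0-\delta, t_0+\delta) \cap [0,T]\bigr)
\]
is a relative neighborhood of $(x_0, t_0)$ in $\ol{\Omega_T}$ with $\ol{W_{(x_0,t_0)}} \subseteq \ol{U_{k_0}^{m_0}} \times [0, t_{m_0}]$ and $W_{(x_0,t_0)} \cap \Omega_T \subseteq U_{k_0}^{m_0} \times (0, t_{m_0})$.

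By compactness of $K$, I extract a finite subcover $W_1, \dots, W_N$ with associated pairs $(k(j), m_j)$. I then take a smooth partition of unity $\psi_0, \psi_1, \dots, \psi_N \in \C C^\infty(\ol{\Omega_T})$ subordinate to the open cover $\{W_1, \dots, W_N, \ol{\Omega_T} \setminus K\}$ of $\ol{\Omega_T}$, so that $\mathrm{supp}(\psi_j) \subseteq W_j$ for $j \geq 1$, $\mathrm{supp}(\psi_0) \subseteq \ol{\Omega_T} \setminus K$, and $\sum_{j=0}^{N} \psi_j \equiv 1$. Since $\psi_0 \equiv 0$ on $K$, property (iii) follows; (ii) follows from $\mathrm{supp}(\psi_j) \subseteq \ol{W_j} \subseteq \ol{U_{k(j)}^{m_j}} \times [0, t_{m_j}]$, and (i) from $\bigcup_j W_j \cap \Omega_T \subseteq \bigcup_j U_{k(j)}^{m_j} \times (0, t_{m_j})$. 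After relabeling, the proof is complete. I expect the only nontrivial bookkeeping to be the time coordinate, where $t_{m_0}$ must be chosen strictly larger than $t_0$ so that the open time interval ends before $t_{m_0}$ while the closed interval stays inside $[0, t_{m_0}]$, with the boundary case $t_0 = T$ handled via $T \in \{t_m\}$; everything else reduces to standard compactness and bump-function constructions.
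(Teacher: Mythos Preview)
Your proof is correct. Both your argument and the paper's arrive at the same endpoint---a finite cover of $K$ by sets whose $\Omega_T$-part sits inside some $U_k^{m_k}\times(0,t_{m_k})$, followed by a standard partition of unity---but the route to the cover is genuinely different. You work \emph{pointwise}: for each $(x_0,t_0)\in K$ you pick $t_{m_0}>t_0$ (using density and the case $T\in\{t_m\}$), invoke the fine-representation property at $x_0\in F(t_{m_0})$, and build a product neighborhood $W_{(x_0,t_0)}$ by hand; compactness then gives the finite subcover. The paper instead argues \emph{globally}: it enlarges each $U_k^m\times(0,t_m)$ to an open set in $\R^{n+1}$ with the same trace on $\Omega_T$, uses Zorn's lemma to obtain a maximal such enlargement, and then verifies (by contradiction, again splitting into $t<T$ and $t=T$) that the maximal family covers all of $F$, after which Heine--Borel and a partition of unity finish the job.

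Your approach is more elementary---it avoids Zorn's lemma, which is unnecessary for such a finite covering statement---and makes the role of the two key hypotheses (density of $\{t_m\}$ and the fine-representation property) completely transparent. The paper's maximal-extension device is more indirect; its only advantage is that it produces a single ambient open cover of $F$ once and for all, independent of $K$, but this extra uniformity is not used anywhere. One small point worth making explicit in your write-up: the inclusion $\overline{V\cap\ol\Omega}\subseteq\overline{U_{k_0}^{m_0}}$ follows because $V\cap\partial\Omega\subseteq\overline{V\cap\Omega}$ (any boundary point of $\Omega$ lying in the open set $V$ is approximated by points of $V\cap\Omega$), so $\overline{V\cap\ol\Omega}=\overline{V\cap\Omega}\subseteq\overline{U_{k_0}^{m_0}}$.
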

	\begin{proof}
		We extend the family of open sets $\{V_k^m\}$ given by $V_k^m:=U_k^m\times(0,t_{m_k})$ in the following way.
		Define
		$$
			\C P:=\Big\{\{W_k^m\}_{k,m\in\N}\,\big|\,W_k^m\subseteq\R^{n+1} \text{ is open with }W_k^m\cap\Omega_T=U_k^m\times(0,t_{m_k})\Big\}.
		$$
		We see that $\C P$ is non-empty and that every totally ordered subset of $\C P$ has an upper bound with respect to the ''$\leq$`` ordering defined by
		$$
			\{W_k^m\}\leq \{\widetilde W_k^m\}\,\Leftrightarrow\,W_k^m\subseteq \widetilde W_k^m\text{ for all }k,m\in\N.
		$$
		By Zorn's lemma, we find a maximal element $\{\widetilde V_k^m\}$.
		It holds
		\begin{align}
		\label{eqn:maxElementProp}
			F\subseteq\bigcup_{k,m\in\N}\widetilde V_k^{m}.
		\end{align}
		Assume that this condition fails.
		Then, because of $F\cap \Omega_T=\bigcup_{k,m\in\N}V_k^{m}$, there exists a $p=(x,t)\in F\cap \partial(\Omega_T)$
		with $p\not\in\bigcup_{k,m\in\N}\widetilde V_k^m$.
		
		Let us consider the case $t<T$.
		Since $F\subseteq\ol{\Omega_T}$ is relatively open, we find an $m_0\in\N$ with $x\in F(t_{m_0})$ and $t_{m_0}>t$.
		By the fine representation property of $\{U_k^{m_0}\}_{k\in\N}$ for $F(t_{m_0})$,
		we find an open set $U\subseteq\R^n$ with $x\in U$ and $k_0\in\N$ such that $U\cap\Omega\subseteq U_{k_0}^{m_0}$.
		
		The family $\{\widetilde W_k^m\}$ given by
		$$
			\widetilde W_k^m:=
			\begin{cases}
				\widetilde V_k^m\cup U\times(-\infty,t_{m_0})&\text{if }k=k_0\text{ and }m=m_0,\\
				\widetilde V_k^m&\text{else,}
			\end{cases}
		$$
		satisfies $\{\widetilde W_k^m\}\in\C P$ and $p\in\bigcup_{k,m\in\N}\widetilde W_k^m$ which contradicts the maximality
		property of $\{\widetilde V_k^{m}\}$.
		
		In the case $t=T$, we also find $k_0,m_0\in\N$ and an open set $U\subseteq\R^n$ with $x\in U$ such that $U\cap\Omega\subseteq U_{k_0}^{m_0}$
		and $t_{m_0}=T$.
		The family $\{\widetilde W_k^m\}$ given by
		$$
			\widetilde W_k^m:=
			\begin{cases}
				\widetilde V_k^m\cup U\times\R&\text{if }k=k_0\text{ and }m=m_0,\\
				\widetilde V_k^m&\text{else,}
			\end{cases}
		$$
		also contradicts the maximality of $\{\widetilde V_k^m\}$.
		Therefore, \eqref{eqn:maxElementProp} is proven.
		
		Heine-Borel theorem yields
		$$
			K\subseteq \bigcup_{k\in I}\widetilde V_k^{m_k}
		$$
		for a finite set $I\subseteq\N$ and values $m_k\in\N$, $k\in I$. Together with a partition of unity argument, we
		get functions $\psi_k\in\C C^\infty(\ol{\Omega_T})$ such that (i)-(iii) hold.
		\ep
	\end{proof}\\
	The degenerate limit $\varepsilon\rightarrow0^+$ can be performed as follows:
	\begin{itemize}
		\item[\textbullet]
			We define the strain by $e:=\widehat e|_F\in L^2(F;\R^{n\times n})$ and obtain for the remaining variables
			\begin{align*}
			\begin{aligned}
				&c\in L^\infty(0,T;H^1(\Omega))\cap H^1(0,T;(H^1(\Omega))^\star),
				&&u\in L_t^2 H_{x,\mathrm{loc}}^1(\frak A_D(F);\R^n),\\
				&z\in L^\infty(0,T;W^{1,p}(\Omega))\cap H^1(0,T;L^2(\Omega)),
				&&\mu\in L_t^2 H_{x,\mathrm{loc}}^1(F)
			\end{aligned}
			\end{align*}
			with $e=\epsilon(u)$ in $\frak A(F)$.
		\item[\textbullet]
			Passing to the limit $\varepsilon\rightarrow0^+$ in \eqref{eqn:ID1}, \eqref{eqn:ID4} and \eqref{eqn:ID5} imply
			properties \eqref{eqn:forceBalanceWeak},
                        \eqref{eqn:VI} and \eqref{eqn:EI}, cf. \cite{WIAS1722}.
		\item[\textbullet]
			Using Lemma \ref{lemma:convergingSubsequences} (a) and Corollary \ref{cor:mueConvergence},
			we can pass to $\varepsilon\rightarrow 0^+$ in \eqref{eqn:ID2} and obtain \eqref{eqn:diffusion}.
		\item[\textbullet]
			Let $\zeta\in L^2(0,T;H^1(\Omega))$ with $\mathrm{supp}(\zeta)\subseteq F$ be a test-function.
			Furthermore, let $\{\psi_l\}$ be a partition of unity of the compact set $K:=\mathrm{supp}(\zeta)$
			according to Lemma \ref{lemma:partitionOfUnity}.
			For each $l\in\N$, we obtain $\mathrm{supp}(\zeta\psi_l)\subseteq \ol{U_l^{m_l}}\times[0,t_{m_l}]$.
			Then, integrating \eqref{eqn:ID3} in time from $0$ to $t_{m_l}$, testing the result with
			$\zeta\psi_l$ and passing to $\varepsilon\rightarrow 0^+$ by using Lemma \ref{lemma:convergingSubsequences}
			and Lemma \ref{lemma:chemicalPotential} show
			\begin{align*}
				\int_0^{t_m}\int_\Omega\mu\zeta\psi_l\dxs
				=\int_0^{t_m}\int_\Omega\Big(\nabla c\cdot\nabla(\zeta\psi_l)
					+\Psi_{,c}(c)\zeta\psi_l+W_{,c}(c,\widehat e,z)\zeta\psi_l\Big)\dxs.
			\end{align*}
			Summing with respect to $l\in I$ and noticing $\sum_{l\in I}\psi_l\equiv 1$ on $\mathrm{supp}(\zeta)$ yield 
			\eqref{eqn:chemicalPotential}.
	\end{itemize}
	In conclusion, the limit procedure in this section yields functions $(c,e,u,z,\mu)$ 
	with $e=\epsilon(u)$ in $\frak A_D(F)$ which satisfy properties (ii)-(v) of Definition \ref{def:weakSolution}.
	In particular, the damage function $z$ has no jumps with respect to time.
	However, we cannot ensure that $\{z>0\}$ equals $\frak A_D(\{z>0\})$ and, moreover, if $F\setminus\frak A_D(\{z>0\})\neq\emptyset$, it
	is not clear whether $u$ can be extended to a function on $F$ such that $e=\epsilon(u)$ also holds in $F$.
	This issue is addressed in the next section where such limit functions are concatenated in order to obtain 
	global-in-time weak solutions with fineness $\eta >0$ by Zorn's lemma.

\section{Existence results}
\label{section:existence}
	In this section, we are going to prove the main results of this
        paper. 
	
	\begin{theorem}[Maximal local-in-time weak solutions]
	\label{theorem:localInTimeExistence}
		\hspace{0.01em}\\
		Let $b\in W^{1,1}(0,T;W^{1,\infty}(\Omega;\R^n))$, $c^0\in H^1(\Omega)$ and
		$z^0\in W^{1,p}(\Omega)$ with $0<\kappa\leq z^0\leq 1$ in $\Omega$ be
		initial-boundary data.
		Then there exist a maximal value $\widehat T>0$ with $\widehat T\leq T$ and functions $c$, $u$, $z$, $\mu$ defined on the time interval $[0,\widehat T]$
		such that $(c,u,z,\mu)$ is a weak solution according to Definition \ref{def:weakSolution}.
		Therefore, if $\widehat T<T$, $(c,u,z,\mu)$ cannot be extended to a weak solution on $[0,\widehat T+\varepsilon]$.
	\end{theorem}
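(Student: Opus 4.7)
The plan is to combine the degenerate-limit construction of Section~\ref{section:degLimit} with a maximality argument, exploiting that the initial datum $z^0 \geq \kappa > 0$ precludes complete damage at $t=0$. First, I would apply the degenerate limit to a recovery sequence built from regularized data with $z_\varepsilon^0 \geq \kappa/2$, obtaining a candidate tuple $(c,e,u,z,\mu)$ on $[0,T]$ satisfying properties (ii)--(v) of Definition~\ref{def:weakSolution}, with $z$ jump-free in time. From the regularity $z \in L^\infty(0,T;W^{1,p}(\Omega)) \cap H^1(0,T;L^2(\Omega))$ together with $p > n$ one obtains $z \in \mathcal C^0(\overline{\Omega_T})$, so $z^0 \geq \kappa$ yields a small interval $[0,\tau]$ on which $z > 0$ everywhere in $\overline\Omega$; on that interval $\mathfrak A_D(\{z(t)>0\}) = \overline\Omega$ and admissibility is automatic.

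To produce the maximal interval, I would invoke Zorn's lemma on the set $\mathcal S$ of all weak solutions on subintervals $[0,T'] \subseteq [0,T]$ in the sense of Definition~\ref{def:weakSolution}, partially ordered by extension. $\mathcal S$ is non-empty by the previous step. Given a chain in $\mathcal S$ with supremum time $T^\star$, the chain solutions patch consistently on $[0,T^\star)$ thanks to the uniqueness of the restriction. By uniform a-priori bounds analogous to Lemma~\ref{lemma:apriori} (which transfer along the chain since each piece has been obtained as a degenerate limit with a common energy budget), the left limit $z^-(T^\star)$ exists in $L^2(\Omega)$ and weakly in $W^{1,p}(\Omega)$, and the equations \eqref{eqn:forceBalanceWeak}, \eqref{eqn:VI}, \eqref{eqn:diffusion}, \eqref{eqn:chemicalPotential} extend to the closed interval since they are pointwise a.e.\ in time, while the energy inequality \eqref{eqn:EI} passes to $T^\star$ by lower-semicontinuity of $\mathcal E^-$ and of the dissipation. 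The resulting object on $[0,T^\star]$ is the required upper bound in $\mathcal S$, provided admissibility still holds at $T^\star$.

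The main obstacle lies in handling the endpoint $T^\star$ when $\{z^-(T^\star) > 0\} \neq \mathfrak A_D(\{z^-(T^\star) > 0\})$: this is exactly the scenario that forces $\widehat T < T$, because extending the tuple beyond such a $T^\star$ inside Definition~\ref{def:weakSolution} would necessarily introduce a jump with $z^+(T^\star) = z^-(T^\star)\mathds 1_{F(T^\star)}$ for a strict inclusion, violating the no-jump property of the solution built by the degenerate limit. A subtler technical point is that $u$ and $\mu$ lie only in the local spaces $L_t^2 H^1_{x,\mathrm{loc}}$; verifying that these local controls survive the passage to $T^\star$ requires the fine-representation machinery of Lemmas~\ref{lemma:partitionOfUnity} and \ref{lemma:chemicalPotential}, applied to a fine cover of $\mathfrak A_D(F(T^\star))$ inherited from the chain together with a diagonal selection of subsequences.

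Once Zorn delivers a maximal element of $\mathcal S$, its time of existence is the desired $\widehat T$, and the non-extendibility statement is encoded in the maximality itself: any extension to $[0,\widehat T+\varepsilon]$ would contradict the maximal choice. This yields Theorem~\ref{theorem:localInTimeExistence} without any further analytical input beyond what is established in Section~\ref{section:degLimit}.
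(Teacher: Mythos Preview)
Your overall architecture---use Section~\ref{section:degLimit} to populate the set of weak solutions, then apply Zorn's lemma under the extension ordering---is exactly the paper's strategy. The non-emptiness argument via continuity of $z$ and $z^0\geq\kappa$ is also the same.

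There are, however, two genuine confusions in your sketch of the upper-bound step. First, elements of $\mathcal S$ are \emph{arbitrary} weak solutions in the sense of Definition~\ref{def:weakSolution}; nothing says they arise from the degenerate-limit construction. Hence the uniform bounds along a chain cannot be justified by ``a common energy budget'' inherited from Lemma~\ref{lemma:apriori}. They must come directly from the energy inequality~\eqref{eqn:EI}, which is part of the definition and is available for every element of $\mathcal S$.

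Second, and more seriously, you write that the patched object on $[0,T^\star]$ is an upper bound ``provided admissibility still holds at $T^\star$'', and then treat the failure of admissibility as the mechanism forcing $\widehat T<T$. This conflates two distinct issues. For Zorn's lemma to apply at all, \emph{every} chain must have an upper bound in $\mathcal S$, unconditionally; you cannot leave this contingent on a property of the endpoint. The correct argument (to which the paper simply refers via \cite{WIAS1722}) shows that the limit object on $[0,T^\star]$ is always a weak solution in the sense of Definition~\ref{def:weakSolution}: since $z\in SBV^2(0,T;L^2(\Omega))$ and the compatibility $z^+(t)=z^-(t)\mathds 1_{F(t)}$ is built into the definition, a jump at $T^\star$ is permitted and does not ``violate the no-jump property''---the degenerate-limit construction produced a jump-free $z$, but weak solutions in general may jump. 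The obstruction to extending beyond $\widehat T$ is a separate matter, and it is precisely what maximality records; it is not something you need to analyse inside the upper-bound step.
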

	\begin{proof}
          Zorn's lemma can be applied to the set
		\begin{align*}
			\C P:=\big\{(\widehat T,c,u,z,\mu)\,|\,&0<\widehat T\leq T\text{ and }(c,u,z,\mu)
				\text{ is a weak solution on}\\
			&\text{$[0,\widehat T]$ according to Definition \ref{def:weakSolution}}\big\}
		\end{align*}
		to find a maximal element with respect to the following partial ordering
		\begin{align}
			(\widehat T_1,c_1,u_1,z_1,\mu_1)\leq (\widehat T_2,c_2,u_2,z_2,\mu_2)\quad\Leftrightarrow\quad&
				\widehat T_1\leq \widehat T_2,\,c_2|_{[0,\widehat T_1]}=c_1,\,u_2|_{[0,\widehat T_1]}=u_1,\notag\\
			&z_2|_{[0,\widehat T_1]}=z_1,\,\mu_2|_{[0,\widehat T_1]}=\mu_1.
		\label{eqn:ordering}
		\end{align}
		Indeed, $\C P\neq\emptyset$ by the result in Section \ref{section:degLimit}.
		More precisely, since $z\in L^\infty(0,T;W^{1,p}(\Omega))\cap H^1(0,T;L^2(\Omega))$ and since $0<\kappa\leq z^0$, we find an $\varepsilon>0$
		such that $\{z(t)>0\}=\frak A_D(\{z(t)>0\})$ for all $t\in[0,\varepsilon]$.
		For the proof that every totally ordered subset of $\C P$ has an upper bound,
		we refer to \cite{WIAS1722}.
		\ep
	\end{proof}

	The proof of global-in-time existence of weak solutions with fineness
  $\eta$ requires	a concatenation property (see Lemma \ref{lemma:concatenation}) which is, in turn, based on some deeper insights into the
	$\Gamma$-limit $\frak E$ introduced in Section \ref{section:degLimit}.
	To this end, it is necessary to have more information about the recovery sequences for $\frak F_\varepsilon\xrightarrow{\Gamma}\frak F$.
	
	We will introduce the following substitution method:
	Assume that $u\in H^1(\Omega;\R^n)$ minimizes $\C F_\varepsilon(c,\epsilon(\cdot),z)$ with Dirichlet data $\xi$ on $D$.
	Then, by expressing the elastic energy density $W$ in terms of its derivative $W_{,e}$, i.e.
	$$
		W(c,e,z)=\frac 12 W_{,e}(c,e,z):e+\frac 12z\varphi^2(c):e+z\varphi^3(c),
	$$
	and by testing the momentum balance equation with $\zeta=u-\widetilde u$
	for a function $\widetilde u\in H^1(\Omega;\R^n)$ with $\widetilde u=\xi$ on $D$, the elastic energy term in
	$\C F_\varepsilon$ can be rewritten as
	\begin{align}
	\label{eqn:weakEnergyTransform}
		\int_\Omega W_\varepsilon(c,\epsilon(u),z)\dx
		=\int_\Omega (g(z)+\varepsilon)\left(\varphi^1 \epsilon(u):\epsilon(\widetilde u)+\frac 12 \varphi^2(c):(\epsilon(u)
			+\epsilon(\widetilde u))+\varphi^3(c)\right)\dx.
	\end{align}
	For convenience, in the following proof, we define the density function $\widetilde W$ as
	$$
		\widetilde W_\varepsilon(c,e,e_1,z):=(g(z)+\varepsilon)\left(\varphi^1 e:e_1+\frac 12 \varphi^2(c):(e+e_1)+\varphi^3(c)\right).
	$$
	\begin{lemma}
	\label{lemma:recoverySequence}
		For every $c\in H^1(\Omega)$, $\xi\in W^{1,\infty}(\Omega)$ and $z\in W^{1,p}(\Omega)$
		there exists a sequence $\delta_\varepsilon\rightarrow 0^+$ such that
		$(c,\xi,(z-\delta_\varepsilon)^+)\rightarrow (c,\xi,z)$ is a recovery sequence for $\frak F_\varepsilon\xrightarrow{\Gamma}\frak F$.
	\end{lemma}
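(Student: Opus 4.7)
The statement asserts that a particular sequence is a recovery sequence for $\frak F_\varepsilon\xrightarrow{\Gamma}\frak F$, i.e.\ (a) $(c,\xi,(z-\delta_\varepsilon)^+)\to(c,\xi,z)$ in the $H_{\mathrm w}^1\times W^{1,\infty}\times W_{\mathrm w}^{1,p}$ topology and (b) $\lim_\varepsilon\frak F_\varepsilon(c,\xi,(z-\delta_\varepsilon)^+)=\frak F(c,\xi,z)$. Condition (a) is elementary for any $\delta_\varepsilon\to 0^+$: the Sobolev chain rule gives $\nabla(z-\delta)^+=\mathds{1}_{\{z>\delta\}}\nabla z$, and since $\mathds{1}_{\{z>\delta_\varepsilon\}}\to\mathds{1}_{\{z>0\}}$ pointwise a.e.\ while $|\nabla z|\in L^p(\Omega)$, dominated convergence shows $(z-\delta_\varepsilon)^+\to z$ strongly (hence weakly) in $W^{1,p}(\Omega)$; the $c$- and $\xi$-components are stationary.

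For condition (b), the $\Gamma$-liminf inequality $\liminf_\varepsilon\frak F_\varepsilon(c,\xi,(z-\delta_\varepsilon)^+)\ge\frak F(c,\xi,z)$ is built into the definition of $\frak F$ and requires no argument, so the entire content is the $\Gamma$-limsup inequality. Given $\eta>0$, I would select a near-optimal competitor $\widetilde u=\xi+\widetilde\zeta$ with $\widetilde\zeta\in H_D^1(\Omega;\R^n)$ whose associated energy approximates $\frak F(c,\xi,z)$ to within $\eta$. Here the characterization of $\frak F$ as the $\Gamma$-limit has to be exploited: the limit functional effectively minimizes only over the admissible part $\frak A_D(\{z>0\})$, so one takes a near-minimizer on that region and extends it across the completely damaged set $\{z=0\}$ in a Sobolev-regular way, using that $\xi\in W^{1,\infty}(\Omega)$ provides a ready $W^{1,\infty}$ extension. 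Inserting $\widetilde\zeta$ as a competitor in the minimization defining $\frak F_\varepsilon(c,\xi,(z-\delta_\varepsilon)^+)$ yields
\begin{align*}
\frak F_\varepsilon(c,\xi,(z-\delta_\varepsilon)^+)\le\int_\Omega\bigl(g((z-\delta_\varepsilon)^+)+\varepsilon\bigr)\varphi(c,\epsilon(\widetilde u))\,dx.
\end{align*}
Splitting $\Omega=\{z>\delta_\varepsilon\}\cup\{z\le\delta_\varepsilon\}$: on the first set $g((z-\delta_\varepsilon)^+)\to g(z)$ pointwise and the integrand is bounded by $(\|g\|_\infty+1)\varphi(c,\epsilon(\widetilde u))$, so dominated convergence delivers $\int_{\{z>0\}}g(z)\varphi(c,\epsilon(\widetilde u))\,dx$; on the second set $g((z-\delta_\varepsilon)^+)\equiv 0$ and the integrand reduces to $\varepsilon\,\varphi(c,\epsilon(\widetilde u))$, which tends to $0$ because $\varphi(c,\epsilon(\widetilde u))\in L^1(\Omega)$ by the growth assumptions \eqref{eqn:assumptionPhi2}--\eqref{eqn:assumptionPhi3} together with $c\in H^1(\Omega)$ and $\epsilon(\widetilde u)\in L^2(\Omega;\R_{\mathrm{sym}}^{n\times n})$. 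Sending $\eta\to 0^+$ afterwards closes the recovery.

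The principal obstacle, and the reason the authors single out the form $(z-\delta_\varepsilon)^+$ rather than the constant sequence $z$, is that one must suppress pathological contributions of the $\varepsilon$-regularization on the completely damaged region: taking $z_\varepsilon\equiv z$ would leave the residual $\varepsilon\int_{\{z=0\}}\varphi(c,\epsilon(u_\varepsilon))\,dx$ against the actual minimizer $u_\varepsilon$, which need not vanish because $u_\varepsilon$ can have unbounded gradients on $\{z=0\}$. Shifting to $(z-\delta_\varepsilon)^+$ turns $\{z\le\delta_\varepsilon\}$ into a genuine null-damage region on which the minimization of $\C F_\varepsilon$ decouples, and the residual is tested only against the fixed $H^1$-competitor $\widetilde u$. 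This is precisely the role of the substitution identity \eqref{eqn:weakEnergyTransform} stated immediately before the lemma: it allows the minimized energy to be re-expressed as a bilinear pairing of the minimizer with an arbitrary competitor, licensing the comparison performed above. A final delicate point is that $\delta_\varepsilon\to 0^+$ must be chosen as a slow diagonal sequence relative to $\varepsilon$ (so that the competitor extension and the $\varepsilon$-residual vanish simultaneously); this choice, rather than the availability of the proof strategy, is what forces the existential formulation of the lemma.
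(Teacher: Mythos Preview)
Your argument has a circularity. You write ``select a near-optimal competitor $\widetilde u=\xi+\widetilde\zeta$ with $\widetilde\zeta\in H_D^1(\Omega;\R^n)$ whose associated energy approximates $\frak F(c,\xi,z)$ to within $\eta$'', and you justify this by saying that $\frak F$ ``effectively minimizes only over the admissible part $\frak A_D(\{z>0\})$, so one takes a near-minimizer on that region and extends it''. But at this point in the paper $\frak F$ is nothing more than the abstract $\Gamma$-limit of the $\frak F_\varepsilon$; no variational characterization of $\frak F$ in terms of minimizers on $\{z>0\}$ or $\frak A_D(\{z>0\})$ has been established. Those structural statements (the two bullets in Remark~\ref{remark:gammaLimit}) are \emph{consequences} of this lemma, not inputs to it. Even granting the characterization, your extension step is not harmless: a near-minimizer in $H^1_{\mathrm{loc}}(\frak A_D(\{z>0\});\R^n)$ need not admit an $H^1(\Omega)$ extension, since its gradient may blow up toward $\partial\{z>0\}$. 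You therefore never produce the fixed $H^1$ competitor your dominated-convergence argument relies on.

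The paper avoids this by never touching $\frak F$ directly. It takes an \emph{arbitrary} recovery sequence $(c_\varepsilon,\xi_\varepsilon,z_\varepsilon)\to(c,\xi,z)$ (such a sequence exists by definition of the $\Gamma$-limit), chooses $\delta_\varepsilon\to 0^+$ so that $(z-\delta_\varepsilon)^+\le z_\varepsilon$ (possible since $z_\varepsilon\to z$ uniformly via $W^{1,p}\hookrightarrow\C C$), and splits
\[
\frak F_\varepsilon(c,\xi,(z-\delta_\varepsilon)^+)-\frak F_\varepsilon(c_\varepsilon,\xi_\varepsilon,z_\varepsilon)=A_\varepsilon+B_\varepsilon,
\]
where $A_\varepsilon=\frak F_\varepsilon(c,\xi,(z-\delta_\varepsilon)^+)-\frak F_\varepsilon(c,\xi,z_\varepsilon)\le 0$ by monotonicity of $\C F_\varepsilon$ in the $z$-argument, and $B_\varepsilon=\frak F_\varepsilon(c,\xi,z_\varepsilon)-\frak F_\varepsilon(c_\varepsilon,\xi_\varepsilon,z_\varepsilon)$. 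This is where \eqref{eqn:weakEnergyTransform} actually enters: it rewrites each of the two \emph{minimized} energies as a bilinear pairing of its minimizer against the \emph{other} minimizer, so that $B_\varepsilon$ becomes an expression in which the minimizers $u_\varepsilon,v_\varepsilon$ appear only through bounded quantities like $(g(z_\varepsilon)+\varepsilon)^{1/2}\epsilon(u_\varepsilon)$, multiplied by factors $\epsilon(\xi-\xi_\varepsilon)$ or $\varphi^2(c)-\varphi^2(c_\varepsilon)$ that vanish in the limit. Your description of the role of \eqref{eqn:weakEnergyTransform}---testing a minimized energy against a single fixed competitor---is not what the paper does; the identity is used to compare two \emph{distinct} minimized energies with different data, which is the step your argument lacks a substitute for.
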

	\begin{proof}
		We follow the idea of the proof in \cite[Lemma 4.9]{WIAS1722}.
		But here we have to deal with the additional concentration variable which complicates the calculation.
		Let $(c_\varepsilon,\xi_\varepsilon,z_\varepsilon)\rightarrow (c,\xi,z)$ be a recovery sequence and $\delta_{\varepsilon}\to0^+$
		such that $(z-\delta_\varepsilon)^+\leq z_\varepsilon$.
		Consider
		\begin{align*}
			\frak F_\varepsilon(c,\xi,(z-\delta_\varepsilon)^+)-\frak F_\varepsilon(c_\varepsilon,\xi_\varepsilon,z_\varepsilon)
			=\underbrace{\frak F_\varepsilon(c,\xi,(z-\delta_\varepsilon)^+)-\frak F_\varepsilon(c,\xi,z_\varepsilon)}_{A_\varepsilon}
			+\underbrace{\frak F_\varepsilon(c,\xi,z_\varepsilon)-\frak F_\varepsilon(c_\varepsilon,\xi_\varepsilon,z_\varepsilon)}_{B_\varepsilon}.
		\end{align*}
		We have $A_\varepsilon\leq 0$ since $\C F_\varepsilon(c,\e(\xi+\zeta),(z-\delta_\varepsilon)^+)\leq\C F_\varepsilon(c,\e(\xi+\zeta),z_\varepsilon)$
		for all $\zeta\in H_D^1(\Omega;\R^n)$.
		Now, we focus on the second term of the right hand side.
		Let $u_\varepsilon,v_\varepsilon\in H_D^1(\Omega;\R^n)$ be given by
		\begin{align*}
			u_\varepsilon=\mathop\mathrm{arg\,min}_{\zeta\in H_D^1(\Omega;\R^n)}\C F_\varepsilon(c,\epsilon(\xi+\zeta),z_\varepsilon),\quad
			v_\varepsilon=\mathop\mathrm{arg\,min}_{\zeta\in H_D^1(\Omega;\R^n)}
				\C F_\varepsilon(c_\varepsilon,\epsilon(\xi_\varepsilon+\zeta),z_\varepsilon).
		\end{align*}
		By using the substitution \eqref{eqn:weakEnergyTransform} for $W_\varepsilon(c,\e(\xi+u_\varepsilon),z_\varepsilon)$
		with test-function $\widetilde u=v_\varepsilon$ and \eqref{eqn:weakEnergyTransform} for
		$W_\varepsilon(c_\varepsilon,\e(\xi_\varepsilon+v_\varepsilon),z_\varepsilon)$ with test-function $\widetilde u=u_\varepsilon$,
		we obtain a calculation as follows:
		\begin{align*}
			B_\varepsilon={}&\C F_\varepsilon(c,\epsilon(\xi+u_\varepsilon),z_\varepsilon)
				-\C F_\varepsilon(c_\varepsilon,\epsilon(\xi_\varepsilon+v_\varepsilon),z_\varepsilon,)\\
			={}&\int_{\Omega}\Big(\widetilde W(c,\epsilon(\xi+u_\varepsilon),\epsilon(\xi+v_\varepsilon),z_\varepsilon+\varepsilon)
				-\widetilde W(c_\varepsilon,\epsilon(\xi_\varepsilon+v_\varepsilon),\epsilon(\xi_\varepsilon+u_\varepsilon),z_\varepsilon+\varepsilon)\Big)\dx\\
			={}&\int_\Omega(g(z_\varepsilon)+\varepsilon)\Big(
				\varphi^1 \epsilon(\xi+u_\varepsilon):\epsilon(\xi+v_\varepsilon)
				-\varphi^1 \epsilon(\xi_\varepsilon+u_\varepsilon):\epsilon(\xi_\varepsilon+v_\varepsilon)\\
			&\qquad
				+\frac 12\varphi^2(c):\epsilon(2\xi+u_\varepsilon+v_\varepsilon)
				-\frac 12\varphi^2(c_\varepsilon):\epsilon(2\xi+v_\varepsilon+u_\varepsilon)
				+\varphi^3(c)-\varphi^3(c_\varepsilon)\Big)\dx\\
			={}&\int_\Omega(g(z_\varepsilon)+\varepsilon)\Big(\varphi^1\epsilon(\xi):\epsilon(\xi)-\varphi^1\epsilon(\xi_\varepsilon):\epsilon(\xi_\varepsilon)
				+\varphi^1\epsilon(u_\varepsilon+v_\varepsilon):\epsilon(\xi-\xi_\varepsilon)\\
			&\qquad
				+\varphi^2(c):\epsilon(\xi-\xi_\varepsilon)+\frac12(\varphi^2(c)-\varphi^2(c_\varepsilon)):\epsilon(2\xi_\varepsilon+u_\varepsilon+v_\varepsilon)
				+\varphi^3(c)-\varphi^3(c_\varepsilon)\Big)\dx\\
			\leq{}&\int_\Omega(g(z_\varepsilon)+\varepsilon)\Big(\varphi^1\epsilon(\xi):\epsilon(\xi)-\varphi^1\epsilon(\xi_\varepsilon):\epsilon(\xi_\varepsilon)
				+\varphi^2(c):\epsilon(\xi-\xi_\varepsilon)+\varphi^3(c)-\varphi^3(c_\varepsilon)\Big)\dx\\
			&+\|(g(z_\varepsilon)+\varepsilon)\varphi^1\epsilon(u_\varepsilon+v_\varepsilon)\|_{L^2(\Omega)}\|\epsilon(\xi-\xi_\varepsilon)\|_{L^2(\Omega)}\\
			&+\frac12\|\varphi^2(c)-\varphi^2(c_\varepsilon)\|_{L^2(\Omega)}\Big(\|(g(z_\varepsilon)+\varepsilon)\epsilon(\xi_\varepsilon+u_\varepsilon)\|_{L^2(\Omega)}
				+\|(g(z_\varepsilon)+\varepsilon)\epsilon(\xi_\varepsilon+v_\varepsilon)\|_{L^2(\Omega)}\Big)
		\end{align*}
		
		Using the convergence properties $c_\varepsilon\weaklim c$ in $H^1(\Omega)$, $\xi_\varepsilon\rightarrow \xi$ in $W^{1,\infty}(\Omega)$,
		$z_\varepsilon\weaklim z$ in $W^{1,p}(\Omega)$ and the boundedness of $\C F_\varepsilon(c,\epsilon(\xi+u_\varepsilon),z_\varepsilon)$ and 
		$\C F_\varepsilon(c_\varepsilon,\epsilon(\xi_\varepsilon+v_\varepsilon),z_\varepsilon)$ with respect to $\varepsilon$,
		we conclude $\limsup_{\varepsilon\rightarrow0^+}B_\varepsilon\leq 0$.
		Now we can proceed as in the proof of \cite[Lemma
                  4.9]{WIAS1722} and the claim follows.\ep
	\end{proof}
	\begin{remark}
	\label{remark:gammaLimit}
		The knowledge of such recovery sequences for $\frak F_\varepsilon\xrightarrow{\Gamma}\frak F$ gives also more information about $\frak E$.
		In particular, we obtain that the result in Lemma \ref{lemma:recoverySequence} with $\frak E_\varepsilon\xrightarrow{\Gamma}\frak E$ instead of
		$\frak F_\varepsilon\xrightarrow{\Gamma}\frak F$ also holds true and, moreover,
		that the following properties are satisfied (cf. \cite[Corollary 4.10, Lemma 4.11]{WIAS1722}):\\
		
		\begin{tabular}{ll}
			\begin{minipage}{12.7em}
				\begin{itemize}
					\item[\textbullet]
						$\frak E(c,\xi,\mathds 1_F z)\leq\frak E(c,\xi,z)$\\
					\item[\textbullet]
						$\frak E(c,\xi,z)\leq \C E(c,\epsilon(u),z)$\\
				\end{itemize}
			\end{minipage}
			&
			\begin{minipage}{29em}
				\begin{itemize}
					\item[]
						$\forall c\in H^1(\Omega),\;\forall \xi\in W^{1,\infty}(\Omega;\R^n),\;\forall z\in W^{1,p}(\Omega)$\\
						$\forall F\subseteq\Omega\text{ open with }\mathds 1_F z\in W^{1,p}(\Omega)$,
					\item[]
						$\forall c\in H^1(\Omega),\;\forall \xi\in W^{1,\infty}(\Omega;\R^n),\;\forall z\in W^{1,p}(\Omega)\text{ with }\\0\leq z\leq 1,$
						$\forall u\in H_\mathrm{loc}^1(\{z>0\};\R^n)\text{ with }u=\xi\text{ on }D\cap\{z>0\}$.
				\end{itemize}
			\end{minipage}
		\end{tabular}
	\end{remark}
	\begin{lemma}
	\label{lemma:concatenation}
		Let $t_1<t_2<t_3$ be real numbers and let $\eta>0$.
		Suppose that
		\begin{align*}
			&\text{$\widetilde q:=(\widetilde c,\widetilde
                    e,\widetilde u,\widetilde z,\widetilde\mu,\widetilde F)$
                    is a weak solution with fineness $\eta$ on $[t_1,t_2]$},\\
			&\text{$\widehat q:=(\widehat c,\widehat e,\widehat
                    u,\widehat z,\widehat\mu,\widehat F)$ 
                    is a weak solution with fineness $\eta$ on $[t_2,t_3]$
										 with $\widehat{\frak e}_{t_2}^+=\frak E(\widehat c(t_2),\widehat b(t_2),\widehat z^+(t_2))$}\\
			&\qquad\;\text{(the value $\frak e_{t_2}^+$ for the weak solution $\widehat q$ in Definition \ref{def:weakSolution}).}
		\end{align*}
		Furthermore, suppose
		the compatibility condition
		$\widehat c(t_2)=\widetilde c(t_2)$ and
		$\widehat z^+(t_2)=\widetilde z^-(t_2)\mathds 1_{\frak A_D(\{\widetilde z^-(t_2)>0\})}$
		and the Dirichlet boundary data $b\in W^{1,1}(t_1,t_3;W^{1,\infty}(\Omega;\R^n))$.
		
		Then, we obtain that $q:=(c,e,u,z,\mu,F)$ defined as
		$q|_{[t_1,t_2)}:=\widetilde q$ and
		$q|_{[t_2,t_3]}:=\widehat q$
		is a weak solution with fineness $\eta$ on $[t_1,t_3]$.
	\end{lemma}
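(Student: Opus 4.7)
The plan is to check each of the five defining properties of Definition \ref{def:approxWeakSolution} for the concatenated tuple $q=(c,e,u,z,\mu,F)$ separately. Two of them transfer trivially: the quasi-static mechanical equilibrium (ii) and the damage variational inequality (iv) are pointwise-in-time statements, so they hold for a.e.\ $t\in(t_1,t_2)$ by $\widetilde q$ and for a.e.\ $t\in(t_2,t_3)$ by $\widehat q$. Likewise the set $F:=\widetilde F\cup\widehat F$ is shrinking, and the two fineness conditions $\frak A_D(\{z^-(t)>0\})\subseteq F(t)$ with $\C L^n(F(t)\setminus\frak A_D(\{z^-(t)>0\}))<\eta$, as well as the equality outside cluster points of $J_{z^\star}$, hold pointwise in $t$ and are inherited from each piece.

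For the function spaces in (i), the compatibility $\widetilde c(t_2)=\widehat c(t_2)$ ensures that $c$ is continuous across $t_2$ as an $L^2(\Omega)$-valued function, from which $c\in L^\infty(t_1,t_3;H^1(\Omega))\cap H^1(t_1,t_3;(H^1(\Omega))^\star)$ follows by Bochner gluing. For the damage profile, the identity $\widehat z^+(t_2)=\widetilde z^-(t_2)\mathds 1_{\frak A_D(\{\widetilde z^-(t_2)>0\})}$ is exactly what is needed to absorb the possible discontinuity at $t_2$ into the jump set $J_z$, keeping $z\in SBV^2(t_1,t_3;L^2(\Omega))$; the local Sobolev regularity of $u$ and $\mu$ on $F$ is inherited piecewise. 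For the diffusion identity (iii), I would test the equations for $\widetilde q$ and $\widehat q$ separately with an arbitrary $\zeta\in L^2(t_1,t_3;H^1(\Omega))\cap H^1(t_1,t_3;L^2(\Omega))$ satisfying $\zeta(t_3)=0$, treating $t_2$ as a free endpoint. The integration-by-parts in time produces interface terms at $t_2$ proportional to $\widetilde c(t_2)-\widehat c(t_2)$, which vanish by hypothesis; summing the two identities then yields \eqref{eqn:diffusion} on $(t_1,t_3)$. The chemical-potential equation \eqref{eqn:chemicalPotential} passes by splitting the support of any admissible test function between $\widetilde F$ and $\widehat F$.

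The main obstacle is the energy inequality (v), where the assumption on $\widehat{\frak e}_{t_2}^+$ plays its decisive role. Passing to the limit $t\uparrow t_2$ in the inequality for $\widetilde q$ gives, by the very definition of $\C E^-$,
\begin{align*}
\C E^-(t_2)+\C J(t_1,t_2)+\C D(t_1,t_2)\leq \C E^+(t_1)+\C W_{\mathrm{ext}}(t_1,t_2),
\end{align*}
while for every $t\in[t_2,t_3]$ the inequality for $\widehat q$ reads
\begin{align*}
\C E(t)+\C J(t_2,t)+\C D(t_2,t)\leq \widehat{\frak e}_{t_2}^++\C W_{\mathrm{ext}}(t_2,t).
\end{align*}
I would then declare $\frak e_{t_2}^+:=\widehat{\frak e}_{t_2}^+=\frak E(\widehat c(t_2),\widehat b(t_2),\widehat z^+(t_2))$ for the concatenated solution and add the two inequalities. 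Admissibility of this choice, i.e.\ the bound
\begin{align*}
\frak e_{t_2}^+\leq \inf_{\zeta\in H^1_{D\cap F(t_2)}(F(t_2);\R^n)}\C E_{F(t_2)}(c(t_2),\e(b(t_2)+\zeta),z(t_2)),
\end{align*}
is exactly what the second bullet of Remark \ref{remark:gammaLimit} delivers, together with the fact that the elastic contribution $W$ vanishes on $\{\widehat z^+(t_2)=0\}$ so that restricting to $\{\widehat z^+(t_2)>0\}$ versus $F(t_2)$ only adds nonnegative gradient and potential terms. With this choice, the surplus $\C E^-(t_2)-\frak e_{t_2}^+$ arising from the addition is exactly the jump contribution at $t_2$, merging $\C J(t_1,t_2)$ and $\C J(t_2,t)$ into $\C J(t_1,t)$ for the concatenation and producing the desired inequality on $[t_1,t_3]$.

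The delicate step worth double-checking is the identification $\C E^+(t_2)=\frak e_{t_2}^+$ for the concatenated solution together with the left-hand limit identity $\lim_{\tau\to t_2^-}\mathop\essinf_{\vartheta\in(\tau,t_2)}\C E(\vartheta)=\C E^-(t_2)$ as used above, both of which depend only on $\widetilde q$ on $[t_1,t_2)$ and on $\widehat q$ at $t_2$, and hence follow unambiguously from the definitions applied to each piece in isolation.
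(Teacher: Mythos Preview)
Your overall plan matches the paper's, and the pointwise-in-time items (ii), (iv), the shrinking of $F$, the fineness conditions, the gluing of the function spaces in (i), and the diffusion identity (iii) are handled correctly. The real content is, as you say, item (v).

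However, there is a genuine gap in your treatment of (v). You declare $\frak e_{t_2}^+:=\widehat{\frak e}_{t_2}^+=\frak E(c(t_2),b(t_2),z^+(t_2))$ and then assert that the surplus $\C E^-(t_2)-\frak e_{t_2}^+$ is ``exactly the jump contribution at $t_2$''. That is only the formal bookkeeping; what is missing is the proof that this surplus is controlled, i.e.\ that
\[
\C E^-(t_2)=\lim_{s\to t_2^-}\mathop\essinf_{\tau\in(s,t_2)}\C E_{F(\tau)}(c(\tau),e(\tau),z(\tau))\;\geq\;\frak E(c(t_2),b(t_2),z^+(t_2)).
\]
This inequality is not automatic: the left-hand side is a lim--essinf of energies computed on the time-dependent domains $F(\tau)$ of $\widetilde q$, while the right-hand side involves the $\Gamma$-limit $\frak E$ evaluated at the \emph{truncated} profile $z^+(t_2)=\widetilde z^-(t_2)\mathds 1_{\frak A_D(\{\widetilde z^-(t_2)>0\})}$. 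The paper establishes precisely this bound through a chain of inequalities that uses \emph{both} bullets of Remark~\ref{remark:gammaLimit}, not only the second one you quote: first $\C E\geq\frak E$ (second bullet), then sequential lower semicontinuity of $\frak E$ as a $\Gamma$-limit to pass to $\tau\to t_2^-$, and finally the monotonicity $\frak E(c,\xi,\mathds 1_G z)\leq\frak E(c,\xi,z)$ (first bullet) to drop from $\chi:=z^-(t_2)\mathds 1_{\bigcap_{\tau}\frak A_D(\{z^-(\tau)>0\})}$ down to $z^+(t_2)$. Without this step your argument is incomplete: if $t_2\notin J_z$ the surplus has no jump term to be absorbed into and must be shown nonnegative directly, and even if $t_2\in J_z$ you have not verified that the energy actually satisfies the inequality across the concatenation point.

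In short, your only use of Remark~\ref{remark:gammaLimit} is for the \emph{upper} bound on $\frak e_{t_2}^+$; the paper's ``crucial energy estimate'' is the matching \emph{lower} bound on $\C E^-(t_2)$, and that is where the $\Gamma$-limit structure and Lemma~\ref{lemma:recoverySequence} are really used.
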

	\begin{proof}
		Because of the properties in Remark \ref{remark:gammaLimit} we
                can prove
		the following crucial energy estimate at time point $t_2$:
		\begin{align*}
			\lim_{s\rightarrow t_2^-}\mathop\mathrm{ess\,inf}_{\tau\in(s,t_2)}\C E(c(\tau),e(\tau),z(\tau))
			&=\lim_{s\rightarrow t_2^-}\mathop\mathrm{ess\,inf}_{\tau\in(s,t_2)}\C E(c(\tau),e(\tau),z^-(\tau))\\
			&\geq\lim_{s\rightarrow t_2^-}\mathop\mathrm{ess\,inf}_{\tau\in(s,t_2)}\C E(c(\tau),\epsilon(u(\tau)),
				z^-(\tau)\mathds 1_{\frak A_D(\{ z^-(\tau)>0\})})\\
			&\geq \lim_{s\rightarrow t_2^-}\mathop\mathrm{ess\,inf}_{\tau\in(s,t_2)}\frak E(c(\tau),b(\tau),
				z^-(\tau)\mathds 1_{\frak A_D(\{ z^-(\tau)>0\})})\\
			&\geq \frak E (c(t_2),b(t_2),\chi)\\
			&\geq \frak E (c(t_2),b(t_2),z^+(t_2))
		\end{align*}
		with $\chi:=z^-(t_2)\mathds 1_{\bigcap_{\tau\in(t_1,t_2)}\frak A_D(\{ z^-(\tau)>0\})}$.
		With this estimate, we can verify the claim by the same
                argumentation as for \cite[Lemma 4.21]{WIAS1722}.
		\ep
	\end{proof}

	\begin{theorem}[Global-in-time weak solutions with fineness $\eta$]
	\label{theorem:globalInTimeExistence}
		\hspace{0.01em}\\
		Let $b\in W^{1,1}(0,T;W^{1,\infty}(\Omega;\R^n))$, $c^0\in H^1(\Omega)$ and
		$z^0\in W^{1,p}(\Omega)$ with $0\leq z^0\leq 1$ in $\Omega$ and $\{z^0>0\}$ admissible with respect to $D$ be
		initial-boundary data.
		Furthermore, let $\eta>0$.
		Then there exists a weak solution $(c,e,u,z,\mu)$ with fineness $\eta>0$ according
		to Definition \ref{def:approxWeakSolution}.
	\end{theorem}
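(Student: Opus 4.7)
The plan is to establish global-in-time existence by Zorn's lemma applied to the partially ordered set
\[
\mathcal{P}_\eta := \big\{(\widehat T,c,e,u,z,\mu,F)\,\big|\,0<\widehat T\leq T,\;(c,e,u,z,\mu,F)\text{ is a weak solution with fineness }\eta\text{ on }[0,\widehat T]\big\},
\]
endowed with the extension ordering analogous to \eqref{eqn:ordering}. Non-emptiness of $\mathcal{P}_\eta$ is provided by the degenerate limit of Section \ref{section:degLimit}: if the initial set $\{z^0>0\}$ is itself admissible we use it directly, otherwise we replace $z^0$ by $z^0\mathds 1_{\frak A_D(\{z^0>0\})}$ and apply the Section \ref{section:degLimit} construction together with a single initial application of Lemma \ref{lemma:concatenation} at $t=0$ to absorb the drop in $\frak e_0^+$ through the energy jump term.

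For the chain condition, let $\mathcal{C}\subseteq\mathcal{P}_\eta$ be totally ordered and set $\widehat T_\star:=\sup_{q\in\mathcal{C}}\widehat T_q$. The uniform a-priori estimates in Lemma \ref{lemma:apriori} and the convergence properties of Lemma \ref{lemma:convergingSubsequences} and Lemma \ref{lemma:chemicalPotential} survive on each truncated sub-interval, so by a diagonal argument the elements of $\mathcal{C}$ can be fused into a common tuple defined on $[0,\widehat T_\star)$ which coincides on each $[0,\widehat T_q]$ with the corresponding chain member. Because $F$ is shrinking and $z$ lies in $SBV^2(0,T;L^2(\Omega))$, one-sided limits $z^-(\widehat T_\star)$ and a set $F(\widehat T_\star)$ can be extracted, producing the required upper bound in $\mathcal{P}_\eta$.

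Zorn's lemma then yields a maximal element $(\widehat T,c,e,u,z,\mu,F)\in\mathcal{P}_\eta$, and the heart of the argument is to exclude $\widehat T<T$. If $\{z^-(\widehat T)>0\}$ is admissible with respect to $D$, the Section \ref{section:degLimit} construction applied to the restart data $(c(\widehat T),z^-(\widehat T),b(\widehat T+\,\cdot\,))$ provides a further weak solution $\widehat q$ on a short interval $[\widehat T,\widehat T+\tau]$ with $\widehat{\frak e}_{\widehat T}^+=\frak E(c(\widehat T),b(\widehat T),z^-(\widehat T))$; Lemma \ref{lemma:concatenation} concatenates $q$ and $\widehat q$ into a strictly larger element of $\mathcal{P}_\eta$, contradicting maximality. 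If $\{z^-(\widehat T)>0\}$ is not admissible, we restart with $z^-(\widehat T)\mathds 1_{\frak A_D(\{z^-(\widehat T)>0\})}$ and invoke the inequality $\frak E(c,\xi,\mathds 1_F z)\leq\frak E(c,\xi,z)$ from Remark \ref{remark:gammaLimit} to check that the drop in energy is absorbed as an admissible contribution to the jump term $\mathcal J(0,\,\cdot\,)$ in \eqref{eqn:EI}; Lemma \ref{lemma:concatenation} again produces a strict extension.

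The main obstacle, and the reason the fineness parameter $\eta$ is introduced, is to prevent the restart procedure from generating an accumulation of exclusion events within a finite time. Here Definition \ref{def:approxWeakSolution} is decisive: $F(t)$ is allowed to exceed $\frak A_D(\{z^-(t)>0\})$ by at most $\eta$ in volume, and the identity $\frak A_D(\{z^-(t)>0\})=F(t)$ is only required outside the $\eta$-neighborhood (on the right) of the cluster set $C_{z^\star}$. Consequently we only need to perform an actual exclusion update of $F$ when the incurred volume loss exceeds $\eta$; small, clustering damage events are simply tolerated inside $F$ for time $\eta$ and then absorbed. Since each genuine update removes a volume at least $\eta$ from the monotone family $\{F(t)\}$ and $\mathcal L^n(\Omega)<\infty$, the total number of such updates is bounded by $\mathcal L^n(\Omega)/\eta$. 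This combinatorial bound, together with Zorn's maximality, forces $\widehat T=T$ and completes the proof.
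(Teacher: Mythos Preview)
Your overall strategy---Zorn's lemma on the set of weak solutions with fineness $\eta$, together with the concatenation Lemma~\ref{lemma:concatenation} to rule out a maximal element with $\widehat T<T$---is exactly the route taken in the paper. The treatment of non-emptiness, chains, and the restart via the Section~\ref{section:degLimit} limit (with the $\frak E$-monotonicity of Remark~\ref{remark:gammaLimit} to control the jump when passing to $\frak A_D(\{z^-(\widehat T)>0\})$) all match.

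The one point that deserves correction is your final paragraph. The combinatorial bound ``at most $\mathcal L^n(\Omega)/\eta$ genuine updates'' is neither needed nor supported by Definition~\ref{def:approxWeakSolution}. That definition requires $\mathcal L^n\big(F(t)\setminus\frak A_D(\{z^-(t)>0\})\big)<\eta$ for \emph{all} $t$; it never asserts that an exclusion step removes mass $\geq\eta$, so your volume-counting argument does not go through. More importantly, it is superfluous: the accumulation scenario you worry about (an increasing sequence of restart times piling up at some $\widehat T<T$) is already absorbed by the chain condition in Zorn's lemma. Such a sequence is a chain in $\mathcal P_\eta$, it has an upper bound in $\mathcal P_\eta$, and the maximal element furnished by Zorn therefore lies beyond every such accumulation. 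The fineness parameter $\eta$ enters not to bound the number of jumps but to make the \emph{upper-bound} construction for chains work: it provides the slack that lets $F$ be defined consistently through a possible cluster point of $J_{z^\star}$ at the supremum of the chain. Once that is in place, a single application of the restart-plus-concatenation step at the maximal $\widehat T$ yields the contradiction, with no finiteness count required.
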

	\begin{proof}
		This result can also be proven by using Zorn's lemma on the set
		\begin{align*}
			\C P:=\big\{(\widehat T,c,e,u,z,\mu, F)\,|\,&0<\widehat T\leq T\text{ and }(c,e,u,z,\mu,F)
				\text{ is a weak solution with fineness $\eta$
                                  on}\\
			&\text{$[0,\widehat T]$ according to Definition \ref{def:approxWeakSolution}}\big\}
		\end{align*}
		with an ordering analogously to \eqref{eqn:ordering}.
		The assumptions for Zorn's lemma can be proven as in Theorem \ref{theorem:localInTimeExistence}
		(see \cite[Proof of Theorem 4.1]{WIAS1722}).
		To show that a maximal element from $\C P$ is actually a weak
                solution with fineness $\eta$ on the time-interval $[0,T]$,
		we need the concatenation property in Lemma \ref{lemma:concatenation}.
		Indeed, if a maximal element $\widetilde q$ is only defined on a time-interval $[0,\widetilde T]$ with $\widetilde T<T$
		we can apply the degenerated limit procedure in Section \ref{section:degLimit} to the initial values $c(\widetilde T)$
		and $z(\widetilde T)$ to obtain a new limit function $\widehat q$.
		By exploiting Lemma \ref{lemma:concatenation}, $q$ is a weak
                solution with fineness $\eta$ on the time-interval
		$[0,\widetilde T+\varepsilon]$ for a small $\varepsilon>0$ which contradicts the maximality of $\widetilde q$.
		\ep
	\end{proof}

	\addcontentsline{toc}{chapter}{Bibliography}{\footnotesize{\setlength{\baselineskip}{0.2 \baselineskip}
	\bibliography{references}
	}
	\bibliographystyle{alpha}}
	
\end{document}